 \theoremstyle{plain}
\newtheorem{theo}{Theorem}[section]
\newtheorem{pr}[theo]{Proposition}
\newtheorem{question}[theo]{Question}
 \newtheorem{lem}[theo]{Lemma}
 \newtheorem{coro}[theo]{Corollary}
\theoremstyle{remark}
\newtheorem{rema}[theo]{Remark}
\theoremstyle{definition}
\newtheorem{defi}[theo]{Definition}
\newtheorem*{notat}{Notation}
\newtheorem*{notata}{Notation and conventions}
\newcommand\dmgm{DM_{gm}}
\newcommand\dmges{DM_{gm}^{eff}}
\newcommand\obj{\operatorname{Obj}}
\newcommand\id{\operatorname{id}}
\newcommand\cu{\underline{C}}
\newcommand\du{\underline{D}}
\newcommand\ssets{\underline{sSets}_{\bullet}}
\newcommand\au{\underline{A}}
\newcommand\bu{\underline{B}}
\newcommand\hu{\underline{H}}
\newcommand\hrt{{\underline{Ht}}}
\newcommand\hw{{\underline{Hw}}}
\newcommand\hwoo{{\underline{Hw}_{\infty}}}
\newcommand\z{{\mathbb{Z}}}
\newcommand\q{{\mathbb{Q}}}
\newcommand\chow{Chow}
\newcommand\spt{\underline{Spt}_{\bullet}}
\newcommand\map{\mathcal{M}}
 \DeclareMathOperator\kthe{\operatorname{K}}
\DeclareMathOperator\enom{\operatorname{End}}
\DeclareMathOperator\kar{\operatorname{Kar}}
\numberwithin{equation}{subsection}
\begin{document}

  \title{Theorem of the heart in negative K-theory for weight structures} 
 \author{Vladimir Sosnilo
 \thanks{ 
the author was  supported by the Russian Science Foundation grant N 16-11-10200.
\newline University of Duisburg-Essen, Faculty of Mathematics, Campus Essen, Thea-Leymann-Strasse 9, Essen, 45127 Germany
\newline Chebyshev Laboratory, St. Petersburg State University, 14th line, 29B, Saint Petersburg,  199178 Russia
\newline
  \newline MSC2010: 18E05, 18E30, 14F42.
} }
\maketitle

\abstract 
We construct the strong weight complex functor (in the sense of Bondarko) for a stable infinity category $\cu$ equipped with 
a bounded weight structure $w$. 
Along the way we prove that $\cu$ is determined by the infinity-categorical heart of $w$. 
This allows us to compare the K-theory of $\cu$ and the K-theory of $\hw$, the classical heart  of $w$. In particular, 
we prove that $\kthe_{n}(\cu) \to \kthe_{n}(\hw)$ are isomorphisms for $n \le 0$.

\bigskip
 
\tableofcontents


\section*{Introduction}
\label{sec:intro}
\addcontentsline{toc}{section}{\nameref{sec:intro}}

The concept of weight structures on triangulated categories was introduced by 
Bondarko in \cite{bws} and also independently by 
Pauksztello in \cite{pauk} by the name of co-t-structures. 
Bondarko's reason for introducing weight structures was to study
various triangulated categories of motives. 
There exist Chow weight structures on the categories 
$\mathcal{DM}(S;R)$, $\mathcal{DK}(S;R)$ for nice pairs $(S,R)$ where 
of $S$ is a scheme and $R$ is a ring  
(see \cite{relweights} and \cite{boluzgarev}). 
This weight structure also exists on $\dmges(k;R)$ (resp. $\dmgm(k;R)$) for perfect 
fields $k$ if the characteristic of k is either 0 or is invertible in $R$
(see \cite{bws}). The heart  
of this weight structure is the category of effective (resp. non-effective) 
Chow motives. 
There also exist Gersten weight structures on certain categories of pro-motives 
containing as a full subcategory either   
$SH^{S^1}(k)^c$, $SH^{MGL}(k)^c$, or $SH(k)^c$ (see \cite{gerst}). 

A weight structure on a triangulated category consists of two subclasses of 
"non-positive" and "non-negative" objects of the category   
that satisfy certain axioms similar to the axioms of t-structures. 
Both t-structures and weight structures have the associated hearts and both 
are used to reduce studying arbitrary 
objects of a triangulated category to studying objects of the heart. 
However, while t-structures let one work with a triangulated 
category 
as with the derived category of some abelian category, weight structures 
are designed to let one 
work with a triangulated category as with the homotopy category of 
complexes over some additive category. 
With a weight structure $w$ on a triangulated category $\cu$ one gets 
a lot of methods to study the category. 
As the easiest application of the theory one gets 
spectral sequences $E(F,M)$ for any homological functor $F$ on $\cu$ and any object 
$M$ of $\cu$. The spectral sequences are functorial in $M$ 
starting from the second page 
and for weight-bounded objects $M$ they converge to $F(M)$. 
For example, in the case of the Gersten weight structures these spectral sequences 
are exactly the Coniveau spectral sequences. 
If $\cu$ satisfies the Brown representability theorem and the classes 
defining the weight structure are closed under taking all coproducts then $\cu$ 
admits a certain t-structure called adjacent to $w$. 

One of the greatest features of the theory is the existence of the so-called weak 
weight complex functor. Ideally we would like to have a conservative 
triangulated functor from $\cu$ to $K(\hw)$, the homotopy category of complexes over 
the heart of $w$, mapping non-positive (resp. non-negative) objects into complexes 
homotopy equivalent to complexes concentrated in non-positive (resp. non-negative) 
degrees and inducing an equivalence of the hearts. In \cite{wcom} Bondarko
constructed this 
strong weight complex functor for categories that admit a filtered enhancement or 
a negative dg-enhancement and conjectured that it exists in general. 
Besides, in \cite{bws} he was also able to construct a weak version of this functor 
for any $\cu$. It's defined as follows. 
The weak category of complexes $K_w(\hw)$ is the 
quotient of $K(\hw)$ by a certain ideal of morphisms weak homotopic to 0. 
Unfortunately this category is not even triangulated, but there are still notions 
of non-positive and non-negative objects and of distinguished triangles. 
The weak weight complex functor is a conservative functor $\cu \to K_w(\hw)$ 
satisfying analogous properties to that of the strong weight complex functor. 
Using this functor one can extend any additive functor from 
$\hw$ to an abelian category to a homological functor on $\cu$ (see 
Corollary 2.3.4 of \cite{gerst} and Theorem 2.3 of \cite{kelsai}). 
Its existence is also used to show that $\kthe_0(\cu)$ is 
isomorphic to $\kthe_0(\hw)$ if $w$ is a bounded weight structure (see Theorem 
8.1.1 of \cite{bws}). 
Although this weak weight complex functor is already a very useful technique, we 
still want to construct the strong weight complex functor. In this paper we do that 
in the case $\cu$ has an $\infty$-categorical enhancement and $w$ is either 
bounded or compactly-generated.

Let $\cu$ be a stable infinity-category endowed with a bounded t-structure $t$ on its 
homotopy category. 
In this setting Clark Barwick proved an analogue of Neeman's theorem of the heart 
(see \cite{thofhrt}). 
More precisely, he showed that the natural map $\kthe^{con}(\hrt) \to 
\kthe^{con}(\cu)$ is a homotopy 
equivalence of connective $\kthe$-spectra, where $\hrt$ is the heart of the 
t-structure. 
Moreover, in \cite{thofneghrt} the map $\kthe(\hrt) \to \kthe(\cu)$ of nonconnective 
K-theory spectra was also shown to be 
an equivalence in case the heart $\hrt$  is noetherian. They also conjecture that 
the map should be an equivalence in general.  
Besides, they prove that the map $\kthe_{-1}(\hrt) \to \kthe_{-1}(\cu)$ 
is an isomorphism without any extra assumptions.

Now assume $\cu$ is a stable infinity-category endowed with a bounded weight 
structure $w$ on its homotopy category. 
As it has already been mentioned there always exists an 
isomorphism $\kthe_0(\hw) \cong \kthe_0(\cu)$. 
It is natural to ask whether an analogue of the theorem of the heart holds for weight 
structures. 
The answer to this question turns out to be no in general as there are 
counterexamples. 
However, in this paper we construct natural maps $\kthe_{n}(\cu) \to \kthe_{n}(\hw)$ for all $n$ 
and prove that they are isomorphisms for $n \le 0$. 
The way we construct the maps is the following. 
We first construct the 
strong weight complex functor on the level on $\infty$-categories $\cu \to Com^b(\hw)$. 
And then this functor induces the natural maps in K-theory. 

Combining this result with the recent result of \cite{thofneghrt} to the setting of 
triangulated categories of motives we obtain that the non-triviality of the negative 
K-groups of the additive category $\chow(S)$ would yield an obstruction to having a 
motivic t-structure on $\dmgm$.  

The paper is organized as follows. In sections \ref{1} and \ref{2} we remind the 
reader of basic notions that we use in the paper, their basic properties, and state the main 
theorems that we will use. We also introduce some notations there. 
In section \ref{3} we consider the functor from the $\infty$-category $WCat^{st,b}_{\infty}$  of stable $\infty$-categories equipped with a weight structure 
to the $\infty$-category $Cat^{add}_{\infty}$
of additive $\infty$-categories given by taking the $\infty$-heart $\hwoo$ of a weight structure. We prove that this functor is an equivalence 
onto its image. It consists of those additive $\infty$-categories in which idempotents of a certain type split. This allows us to construct the 
weight complex functor for any $\cu$ with a bounded weight structure $w$ as the functor corresponding 
to the map $\hwoo \to Nerve(\hw)$ via this equivalence. 
In section \ref{4} we prove that the weight complex induces isomorphisms in negative K-groups. 
In the last section we discuss relations between the negative K-theory of the category of Chow motives, 
the existence of the motivic t-structure, and also the smash-nilpotence conjecture.

The author is deeply grateful to Benjamin Antieau, Mikhail Bondarko, Adeel Khan, and Marc Levine  for numerous useful discussions, 
as well 
as to Xindi Ai, Tom Bachmann, and Maria Yakerson for proofreading the text and pointing out many mathematical and linguistic mistakes. 
He also wants to thank Elden Elmanto, whose questions motivated the author to think about the subject of the paper. 

\section{Reminder on weight structures and infinity-categories}\label{1}

\begin{notata}
First we fix some notations on basic category theory notions. 
Categories can be large or small (e.g. in the sense of a Grothendieck universe of large sets containing the Grothendieck universe of small sets). 

\begin{itemize}
\item
$Sets$ is the category of sets.

\item
$Cat$ is the category of small categories.

\item
$Top$ is the category of topological spaces.

\item
If $C$ is a category, then $\obj C$ denotes the class of objects of $C$. 
For any objects $X,Y$ of $C$ the set of morphisms is denoted by $C(X,Y)$ or by $Hom(X,Y)$ if the category is clear from the context. 


\item
We use the homological grading for complexes. 

\end{itemize}
\end{notata}

Recall some notions from the theory of simplicial sets. 
Denote by $\Delta$ the category whose objects are the linearly ordered sets $[n]= \{0,\cdots, n\}$ for each $n \in \mathbb{N} \cup \{0\}$ and 
the morphisms 
between two such sets are order-preserving maps of sets.  
A simplicial set is a presheaf of sets on the category $\Delta$. The category of simplicial sets is denoted by $sSets$. 
Recall that equivalently a simplicial set can be defined as a collection of sets $\{X[i]\}_{i\ge 0}$ with maps 
$s_i:X([n-1]) \to X([n])$ for all $0\le i<n$ and $d_i:X([n]) \to X([n-1])$ for all $0 \le i \le n$ satisfying certain relations. 
One can also define simplicial sets with values in any category. The category of simplicial abelian groups is denoted by $sAb$. 

For any $n\ge 0$ there is a simplicial set $\Delta^n$  given by the presheaf represented by $[n]$. 
For $n=0$ we also denote it by $pt$.
The $n+1$ order-preserving embeddings $[n-1] \to [n]$ give rise to $n+1$ maps of simplicial sets $d_i: \Delta^{n-1} \to \Delta^n$. 
We denote the image of $d_i$ by $\partial_i\Delta^n$. 
For any $0 \le k \le n$ the simplicial set $\Lambda^n_k$ is the union of the subsets of $\partial_i\Delta^n$ inside $\Delta^n$ for all $i \neq k$. 
We denote by $\partial\Delta^n$ the union of  the subsets of $\partial_i\Delta^n$ inside $\Delta^n$ for all $i$. 


There is a functor $|-|:sSets \to Top$ called the geometric realization of a simplicial set. It's defined as the left Kan 
extension of the 
functor $\Delta \to Top$ sending $[n]$ to the topological simplex $\Delta^n_{top}$. 
We call a map of simplicial sets $X \to Y$ a homotopy equivalence\footnote{To 
not confuse it with other notions we will always use the term "homotopy equivalence", i.e. not "equivalence" or "weak equivalence".}
 if the associated map $|X| \to |Y|$ is a weak homotopy 
equivalence.

The category admits a closed monoidal structure given by the cartesian product and the mapping simplicial set functor 
$Map(X,Y)([n]) = Hom(X \times \Delta^n, Y)$. 
A simplicial category is a category enriched over simplicial sets. 


\subsection{Quasi-categories}

\begin{defi}
A simplicial set $X:\Delta^{op} \to Sets$ is called a quasi-category if the following lifting property is satisfied 

\begin{center}
\begin{tikzcd}
\Lambda^n_k \arrow[d, hook] \arrow[r, "\forall"] & X \\
\Delta^n \arrow[ur, dashed,"\exists"]
\end{tikzcd}
\end{center}
for any $0<k<n$, i.e. for any map $s:\Lambda^n_k \to X$ there exists a map $\Delta^n \to X$ extending $s$. 

Recall that $X$ is called a Kan complex if the lifting property is satisfied for any $0\le k\le n$.
\end{defi}

Recall that the notion generalizes the notion of usual categories via the following construction.  

Any poset gives rise to a category, hence the category $\Delta$ can be viewed as a full subcategory of the category $Cat$. 
Then any small category $C$ gives rise to a presheaf $Nerve(C)$ on $\Delta$ given by 
$[n] \mapsto Cat([n],C)$. This presheaf can be shown to be a quasi-category as the lifting property in this case 
boils down to the existence of compositions of composable morphisms. 
We will call $Nerve(C)$ the nerve of $C$. 
This yields a fully faithful functor $Cat \to Cat_{\infty}$ where $Cat_{\infty}$ is the full simplicial subcategory of the category of simplicial sets whose objects are quasi-categories. 

Moreover, there are other interesting examples of quasi-categories. 
Denote by $P_{i,j}$ the category corresponding to the partially ordered set $\{I \subset [i,j] \subset \mathbb{N}\cup \{0\} | i,j\in I\}$, where the partial order is given by inclusion. 
Denote by $C(n)$ the simplicial category whose set of objects is $\{0,\cdots, n\}$. 
$$Map_{C(n)}(i,j) = 
\begin{cases}
N(P_{i,j}), & i\le j\\
\emptyset, & i>j
\end{cases}$$
This gives a functor $\Delta \to sSet-Cat$.
Now for any small simplicial category $S$ its coherent nerve $\underline{Nerve}(S)$ is the restriction of the functor represented by $S$ to $\Delta$. 
In other words it is the functor $[n] \to Map_{sSet-Cat}(C(n), S)$. In general, it's not a quasicategory, but it is if $S$ is enriched 
over the category of Kan complexes (see Proposition 1.1.5.10 of \cite{htt}). 
In this sense the category of pointed Kan complexes gives rise to a quasi-category that we will denote by $\ssets$. 

Throughout the paper we will use quasi-categories as models for $(\infty,1)$-categories, although any 
other reasonable model would serve our properties.  
From now on by an $\infty$-category we will mean a quasi-category. 
$1$-simplices of a quasi-category will be called morphisms and its $0$-simplices will be called objects. Many notions of usual category theory along with their basic properties can be generalized to quasi-categories. We list them here referring to \cite{htt} and \cite{qcjoy} for 
details. 

\begin{itemize}
\item
A full subcategory of a quasi-category $\au$ is a subsimplicial set $\au'$ 
such that any simplex $x$ of $\au$ all of whose boundaries belong to $\au'$ also belongs to $\au'$. 
Clearly, this condition is equivalent to the following lifting property. 
\begin{center}
\begin{tikzcd}
\partial\Delta^n \arrow[d, hook] \arrow[r, "\forall"] & \au'\arrow[d,hook] \\
\Delta^n \arrow[ur, dashed,"\exists"]\arrow[r,"\forall"] & \au
\end{tikzcd}
\end{center}
It's also a quasi-category. 

For any set of objects $X\subset \au([0])$ one can define the full subcategory spanned by $X$ as the minimal subsimplicial set of $\au$ 
containing $X$ and satisfying the lifting property.

\item 
For a quasi-category $\au$ any pair of objects $(x,y) \in \au([0]) \times \au([0])$ the simplicial set of morphisms $\au(x,y)$ is the pullback of the following diagram 
\begin{center}
\begin{tikzcd}
 & pt\arrow[d, "{(x,y)}"] \\
Map(\Delta^1,\au) \arrow[r] & \au([0]) \times \au([0])
\end{tikzcd}
\end{center}
The simplicial set is always a Kan complex.

\item
There is a functor $h:Cat_{\infty} \to Cat$ left adjoint to the ordinary nerve functor. The category $h(\au)$ is called the homotopy category of $\au$. 
Its set of objects is $\obj \au$ and the set of morphisms between $x,y \in \obj h(\au)$ is $\pi_0\au(x,y)$ (see \cite{qcjoy}(1.6-1.8)).

\item 
For any simplicial set $K$ and a quasi-category $\bu$ the mapping space $Map(K,\bu)$ is a quasi-category. 
In this case we also call it the category of functors $Fun(K,\bu)$. 
If $\bu$ is a full subcategory of a quasi-category $\bu'$ then the subsimplicial subset $Fun(K,\bu) \subset Fun(K,\bu')$ is also a full 
subcategory. 

Let $\au,\bu$ be quasi-categories. 
A functor $F \in \obj Fun(\au,\bu)$ is called fully faithful if the natural map $\au(x,y) \to \bu(F(x),F(y))$ is 
a homotopy equivalence for any pair $(x,y) \in \obj \au \times \obj \au$.

A functor $F \in \obj Fun(\au,\bu)$ is called essentially surjective if the induced map $h(\au) \to h(\bu)$ is essentially surjective. 

A functor $F \in \obj Fun(\au,\bu)$  is called a (categorical) equivalence if it is fully faithful and essentially surjective.  

All these notions are compatible with the classical notions for $\pi_0(F)\in \obj Fun(h(\au),h(\bu))$, i.e. if $F$ 
has one of the properties above then $h(F)$ has the corresponding property too.

\item 
There is a notion of adjoint functors, so is a notion of a limit of a diagram. Adjoint functors yield adjoint functors on the homotopy categories. 
We say that a category $\au$ admits finite limits (resp. colimits) if any diagram $K \to \au$  has a limit (resp. a colimit), where $K$ is a 
simplicial set having only 
finitely many non-degenerate simplices. 

Let $\au,\bu$ be quasi-categories that have finite colimits (resp. limits, resp. both). 
Functors between $\au$ and $\bu$ preserving finite colimits 
(resp. limits, resp. 
both) form a subcategory of the category of functors which we denote by $Fun_{rex}(\au,\bu)$ (resp. $Fun_{lex}(\au,\bu)$, resp. 
$Fun_{ex}(\au,\bu)$). 

Let $\au,\bu$ be quasi-categories that have small colimits (resp. limits). Functors between $\au$ and $\bu$ that preserve 
colimits (resp. limits) form a subcategory of the category of functors which we denote by $Fun^L(\au,\bu)$ 
(resp. $Fun^R(\au,\bu)$). 

\item 
For any quasi-category $\au$ the dual quasi-category $\au^{op}$ is given by the simplicial subset whose underlying sets are the same but 
the face and degeneracy maps are reordered, that is 
$$d^{\au^{op}}_i = d_{n-i}^{\au}:\au([n]) \to \au([n-1])$$ 
and 
$$s^{\au^{op}}_i = s_{n-i}^{\au}:\au([n]) \to \au([n+1])$$

The homotopy category of the dual category $h(\au^{op})$ is equal to the dual category of the homotopy category $h(\au)^{op}$. 

\item 
For any small pointed quasi-category $\au$ there is a fully faithful  left exact functor $\au \to Fun(\au^{op},\ssets)$ denoted by $j$ (see Propositions 5.1.3.1 and 5.1.3.2 of \cite{htt}). We call this functor the Yoneda embedding.

\end{itemize}

\subsection{Stable quasi-categories}

\begin{defi}
An $\infty$-category $\au$ is called stable if it admits a zero object, contains all finite limits and colimits, and any commutative square is a pullback if and only if it is a pushout. 
\end{defi}

Stable $\infty$-categories are important because of the following theorem

\begin{theo}[\cite{ha}, 1.1.2.15]
The homotopy category of a stable $\infty$-category $\cu$ has a natural structure of a triangulated category. 
The shift functor is induced by an adjoint pair of functors $\Sigma:\cu \substack{\leftarrow\\[-1em] \rightarrow} \cu: \Omega$. 
For any pullback square 
\begin{center}
\begin{tikzcd}
X \arrow[r,"f"]\arrow[d] & Y\arrow[d,"g"] \\
pt \arrow[r] & Z
\end{tikzcd}
\end{center}
there is a triangle $X \stackrel{\overline{f}}\to Y \stackrel{\overline{g}}\to Z \to \Sigma X$

\end{theo}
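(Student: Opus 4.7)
The plan is to build the triangulated structure on $h(\cu)$ by constructing a shift functor and a class of distinguished triangles directly from finite (co)limits in $\cu$, exploiting the key stability axiom that a square is a pushout if and only if it is a pullback. First I would define $\Sigma X$ as the pushout of $0 \leftarrow X \to 0$ and $\Omega X$ as the pullback of $0 \to X \leftarrow 0$; both exist by the finite (co)limit hypothesis and are functorial in $X$. The universal property of the pushout yields a natural equivalence between the mapping spaces $\cu(\Sigma X, Y)$ and $\cu(X, \Omega Y)$, giving the adjunction $\Sigma \dashv \Omega$. Stability forces the square exhibiting $\Sigma X$ as a pushout to also be a pullback, which identifies it with the square computing $\Omega \Sigma X$; hence the unit is an equivalence, and dually the counit. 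So $\Sigma$ and $\Omega$ descend to mutually inverse autoequivalences of $h(\cu)$.

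Next I would declare a triangle in $h(\cu)$ to be distinguished if it is isomorphic to one arising from a pushout square
\begin{center}
\begin{tikzcd}
X \arrow[r,"f"] \arrow[d] & Y \arrow[d,"g"] \\
0 \arrow[r] & Z
\end{tikzcd}
\end{center}
together with the connecting morphism $Z \to \Sigma X$ obtained by pasting this pushout onto the pushout square defining $\Sigma X$. The identity triangle is obviously distinguished, and every morphism $f$ can be completed to such a triangle by taking the pushout of $0 \leftarrow X \to Y$; isomorphism invariance is automatic.

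The substance of the argument lies in the rotation and octahedral axioms. For rotation I would paste a second pushout square onto the defining one, placing $Y$ in the top-left, $Z$ in the top-right, $0$ in the bottom-left, and identifying the bottom-right with $\Sigma X$ by the pasting law for pushouts applied to the outer rectangle. This directly produces a distinguished triangle $Y \to Z \to \Sigma X \to \Sigma Y$, and iterating or dualizing yields rotation invariance in both directions. For the octahedral axiom, given a composable pair $X \to Y \to W$, the natural strategy is to realize the four required triangles as the sides of a $3 \times 3$ grid of pushouts obtained by left Kan extension from the span of corners; each small square of the grid is then both a pushout and a pullback by stability, and the octahedron is read off the edges.

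The main obstacle I would expect is organizing the $3 \times 3$ diagram coherently in the $\infty$-categorical sense, since one is not merely composing morphisms on $h(\cu)$ but producing a specified functor $[2] \times [2] \to \cu$ whose every small square is a pushout. This is also the step where passage from the classical to the $\infty$-categorical setting is most delicate: the machinery of left Kan extensions along inclusions of full subcategories of $[2] \times [2]$ into the whole square is what guarantees both the existence and the homotopy coherence needed to make the grid functorial in the original pair of composable arrows.
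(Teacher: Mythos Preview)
The paper does not give its own proof of this statement: it is quoted as background from \cite{ha}, Theorem 1.1.2.15, with no argument supplied. So there is nothing in the paper to compare your proposal against beyond the bare citation.

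That said, your outline is essentially the argument Lurie gives in the cited reference. You correctly build $\Sigma$ and $\Omega$ as cofiber and fiber of the zero map, obtain the adjunction from the universal properties, and use the stability axiom to see that the unit and counit are equivalences. Your description of distinguished triangles via cofiber squares and the connecting map from the pasted outer rectangle is the standard one, and the verification of rotation by pasting pushout squares and of the octahedral axiom via a $3\times 3$ grid of bicartesian squares obtained by left Kan extension is exactly how the proof in \cite{ha} proceeds. Your remark about the coherence issue in building the grid is on point; Lurie handles it with Proposition 4.3.2.15 of \cite{htt} (existence of Kan extensions along full subcategory inclusions), which is the machinery you allude to. One small omission: you do not mention the fill-in axiom (TR3), but this also falls out of the Kan extension machinery and the functoriality of cofibers, so it is not a gap so much as an ellipsis.
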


Due to this theorem stable $\infty$-categories are used extensively as enhancements for triangulated categories. 
Most of the known triangulated categories admit such an enhancement (although there are counterexamples and such an enhancement 
does not have to be unique; see \cite{twe} and \cite{ktriang}, respectively).

We also point out the following easy properties 

\begin{pr}[\cite{ha}, 1.1.3.1]
If $K$ is any 
simplicial set and $\cu$ is a stable $\infty$-category then $Fun(K,\cu)$ is stable.
\end{pr}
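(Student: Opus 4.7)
The plan is to verify each of the three conditions in the definition of a stable infinity-category pointwise, using the general principle that limits, colimits, and pullback/pushout diagrams in $Fun(K,\cu)$ can be computed componentwise once one knows that the corresponding objects and diagrams exist in $\cu$. Each of the three defining properties of stability is invariant under such pointwise checks, so the statement should reduce formally to the corresponding properties of $\cu$ itself.

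First I would observe that the constant functor $\underline{0}:K \to \cu$ at the zero object of $\cu$ is both initial and terminal in $Fun(K,\cu)$: for any functor $F:K \to \cu$, the mapping spaces $Map_{Fun(K,\cu)}(\underline{0},F)$ and $Map_{Fun(K,\cu)}(F,\underline{0})$ can be computed as limits over mapping spaces in $\cu$, all of which are contractible. Next I would invoke the standard result of Lurie (HTT 5.1.2.2 and its dual) that for any diagram $D:J \to Fun(K,\cu)$, the limit (respectively colimit) exists in $Fun(K,\cu)$ provided the pointwise limit (respectively colimit) exists in $\cu$ for each vertex $k$ of $K$, and is in that case itself computed pointwise. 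Applied to $J$ with finitely many non-degenerate simplices, this immediately gives that $Fun(K,\cu)$ has all finite limits and colimits.

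Finally, a commutative square $\sigma$ in $Fun(K,\cu)$ corresponds by the evaluation/adjunction formalism to a functor $K \to Fun(\Delta^1 \times \Delta^1, \cu)$. Since limits and colimits in $Fun(K,\cu)$ are computed pointwise, $\sigma$ is a pullback (respectively pushout) if and only if the evaluation $\sigma(k)$ is a pullback (respectively pushout) in $\cu$ for every vertex $k$ of $K$. Because these two conditions coincide in the stable $\infty$-category $\cu$, they coincide in $Fun(K,\cu)$, finishing the verification. The only real obstacle is technical: one must be careful about the precise formulation of pointwise computation of $(\text{co})$limits in functor categories of quasi-categories, and that vertex-wise evaluation detects when a square in $Fun(K,\cu)$ is a (co)limit square. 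None of the steps require new ideas beyond citing the appropriate statements from \cite{htt}.
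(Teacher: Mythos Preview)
Your argument is correct and is essentially the standard proof (and is the argument Lurie gives in \cite{ha}, 1.1.3.1). Note, however, that the paper does not supply its own proof of this proposition at all: it is stated purely as a reminder, with the citation to \cite{ha} serving as the proof. So there is nothing to compare your approach against in the paper itself; you have simply filled in the details that the paper delegates to the reference.
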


\begin{pr}[\cite{ha}, 1.1.4.1]
Let $\cu$ and $\cu'$ be stable $\infty$-categories. Then $Fun_{lex}(\cu,\cu') = Fun_{ex}(\cu,\cu') = Fun_{rex}(\cu,\cu')$. 
\end{pr}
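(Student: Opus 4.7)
The plan is to deduce the equality from two inputs: (a) the standard criterion that a functor between $\infty$-categories admitting finite limits preserves all finite limits iff it preserves the terminal object and pullback squares, and dually for finite colimits, pushouts and the initial object; (b) the defining properties of stability, namely that the zero object is simultaneously initial and terminal, and that pullback squares coincide with pushout squares.

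Given these, I would argue as follows. Take $F \in Fun_{rex}(\cu,\cu')$. Then $F$ preserves the initial object $0$, so $F(0)$ is initial in $\cu'$; by stability of $\cu'$, every initial object is a zero object and hence also terminal, so $F$ preserves the terminal object. Now let $P$ be a pullback square in $\cu$; by stability of $\cu$, this $P$ is also a pushout, so $F(P)$ is a pushout in $\cu'$ by hypothesis, and by stability of $\cu'$ it is therefore automatically a pullback. Combining, $F$ preserves the terminal object and all pullback squares, so $F \in Fun_{lex}(\cu,\cu')$ by criterion (a).

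For the reverse inclusion $Fun_{lex}(\cu,\cu') \subset Fun_{rex}(\cu,\cu')$ I would run the dual argument, using that $\cu^{op}$ and $(\cu')^{op}$ are again stable (the stability axiom is manifestly self-dual). This yields $Fun_{lex}(\cu,\cu') = Fun_{rex}(\cu,\cu')$, and the common value is contained in $Fun_{ex}(\cu,\cu')$, which in turn sits inside both; hence all three coincide.

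The only delicate ingredient is fact (a), the $\infty$-categorical reduction of preservation of arbitrary finite limits to preservation of pullbacks and the terminal object. In the $1$-categorical setting this is classical, and the $\infty$-categorical analog is a standard result from \cite{htt} obtained by building finite simplicial sets inductively from horns. Once that is granted, the rest of the proof is a direct unwinding of the two stability axioms, so I expect no real obstacle beyond citing (a) correctly.
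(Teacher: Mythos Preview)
Your argument is correct and is essentially the standard proof. The paper does not give its own proof of this proposition but simply cites \cite{ha}, Proposition~1.1.4.1; the argument there is exactly the one you outline, reducing via \cite{htt} (Corollary~4.4.2.5) to preservation of the terminal object and pullback squares and then invoking the stability axioms.
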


Now we give an important example of a stable $\infty$-category.

Abusing the notation we denote by $\z \times \z$ the category associated to the partial order $\z \times \z$. 
Let $\underline{PSpt}_{\bullet} \subset Fun(N(\z \times \z), \ssets)$ be the full $\infty$-subcategory whose objects are functors that 
map objects $(i,j)$ 
into the zero object $pt$ in $\ssets$ for $i\neq j$. 
For any $i \in \z$ and any $F \in \obj \underline{PSpt}_{\bullet}$ we have a diagram 
\begin{center}
\begin{tikzcd}
pt \arrow[r] & F(i+1,i+1) \\
F(i,i)\arrow[u] \arrow[r] & pt \arrow[u]
\end{tikzcd}
\end{center}

Such $F$ is called a spectrum if the squares are pullback for all $i\in \z$. 
The full $\infty$-subcategory of $\underline{PSpt}_{\bullet}$ whose objects are spectra is called the $\infty$-category of spectra $\spt$. 
There exists a natural functor $\ssets \to \spt$ 
adjoint to the functor $\Omega^{\infty}$ of evaluating the functor defining a spectrum at $(0,0)$ (see Corollary 5.5.2.9 of \cite{htt}). 
The full subcategory of $\spt$ containing the image of this functor and which is closed under small colimits is denoted by $\spt^{cn}$. 

\begin{pr}[\cite{ha}, 1.4.3.6]
The category $\spt$ is stable. Its homotopy category is the stable homotopy category $SH$.
\end{pr}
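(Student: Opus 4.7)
The plan is to verify the three axioms of stability for $\spt$ (zero object, finite limits and colimits, pullback--pushout coincidence) and then identify its homotopy category with $SH$. The constant functor at $pt \in \ssets$ is both initial and terminal in $\underline{PSpt}_{\bullet}$, hence a zero object in the full subcategory $\spt$ as well. Next, finite limits in $Fun(N(\z \times \z), \ssets)$ are computed objectwise; the subcategory $\underline{PSpt}_{\bullet}$ is closed under finite limits since any limit of copies of $pt$ is $pt$, and the defining pullback squares of a spectrum are preserved by finite limits (limits commute with limits). So $\spt$ is closed under finite limits, and in particular the loop functor $\Omega_{\spt}$ is computed objectwise by $\Omega_{\ssets}$.

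The crucial step is to show that $\Omega_{\spt}$ is an equivalence. An inverse is supplied by a shift functor $\Sigma_{\spt}$ sending a spectrum $F$ to the functor with value $F(i+1,i+1)$ at $(i,i)$ (and $pt$ off the diagonal, with the evident connecting maps). The defining spectrum pullback squares exhibit $F(i,i) \simeq \Omega_{\ssets} F(i+1,i+1)$, which assembles into natural equivalences $\Omega_{\spt}\Sigma_{\spt} \simeq \id_{\spt}$ and $\Sigma_{\spt}\Omega_{\spt} \simeq \id_{\spt}$. This is the decisive fact distinguishing $\spt$ from $\ssets$: only after passing to spectra does the loop functor become invertible.

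Given this, stability follows from the general principle that a pointed $\infty$-category with finite limits in which $\Omega$ is an equivalence is automatically stable (\cite{ha}, Proposition 1.4.2.27). One sets $\Sigma := \Omega^{-1}$, produces finite colimits from finite limits by dualizing via $\Sigma$, and checks that any square is a pullback if and only if it is a pushout by using that $\Omega$ detects and preserves pullbacks and is itself invertible. I expect this last identification to be the main obstacle to writing out in full detail, since the translation between pullback and pushout squares requires careful diagrammatic manipulation even though it is morally immediate from the invertibility of $\Omega_{\spt}$.

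Finally, to identify $h(\spt)$ with the classical stable homotopy category $SH$, one compares $\spt$ with the homotopy coherent nerve of the full simplicial subcategory of fibrant-cofibrant $\Omega$-spectra in the Bousfield--Friedlander model structure on pointed simplicial sets. The latter has homotopy category $SH$ by definition, and the description of $\spt$ given here, with its pullback condition $F(i,i) \simeq \Omega F(i+1,i+1)$, is precisely the $\infty$-categorical incarnation of an $\Omega$-spectrum, so the two models give equivalent homotopy categories.
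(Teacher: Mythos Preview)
The paper does not supply its own proof of this proposition: it is simply quoted, with attribution to \cite{ha}, Proposition 1.4.3.6, as background material in the preliminaries section. There is therefore no argument in the paper to compare your proposal against.

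That said, your sketch is essentially the standard argument and tracks what Lurie does in \cite{ha}: one shows $\spt$ is pointed with finite limits, observes that the spectrum condition forces $\Omega$ to be an equivalence (via the shift), and then invokes the general principle that a pointed $\infty$-category with finite limits in which $\Omega$ is invertible is stable. The identification of $h(\spt)$ with $SH$ via $\Omega$-spectra is also the standard one. One small caution: double-check the precise numbering of the result you cite for ``$\Omega$ invertible $\Rightarrow$ stable''; in various versions of \cite{ha} the relevant statement appears around Corollary 1.4.2.17 or in the discussion leading to Proposition 1.4.2.21, and 1.4.2.27 may not be exactly the statement you need. But the mathematical content of your outline is correct.
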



We will give another example of a stable category that we will use extensively later.

For any additive category $A$ the category $Com(A)$ is enriched over the category of simplicial sets. 
 By Corollary 1.3.2.12 of \cite{ha} 
the mapping spaces there are automatically Kan fibrations. 
Now we can consider the simplicial nerve of this category.  

\begin{pr}[\cite{ha}, 1.3.2.10]
The $\infty$-category $\underline{Nerve}(Com(A))$ is stable. Its homotopy category is the homotopy category of complexes. 
\end{pr}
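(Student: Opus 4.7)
The plan is to verify the three defining conditions for $\cu := \underline{Nerve}(Com(A))$ to be stable: existence of a zero object, existence of all finite limits and colimits, and coincidence of pushout and pullback squares.

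First I would unwind the simplicial enrichment on $Com(A)$. For complexes $X, Y$, there is an internal Hom chain complex $Hom^\bullet(X,Y)$ whose degree-$0$ cycles are chain maps and whose degree-$0$ boundaries are the null-homotopic ones; the simplicial enrichment on $Com(A)$ comes from applying Dold--Kan to the non-negative truncation of this complex. Hence the mapping spaces satisfy $\pi_n Map(X,Y) \cong H_n(Hom^\bullet(X,Y))$ for $n \ge 0$, so in particular $\pi_0 Map(X,Y)$ is the set of chain homotopy classes. It follows that the homotopy category $h(\cu)$ is canonically the classical $K(A)$, which carries its standard triangulated structure with shift and mapping cones.

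Second, I would verify existence of a zero object and of finite (co)limits by explicit constructions. The zero complex has contractible mapping spaces on both sides. The direct sum of complexes represents both the product and the coproduct in $\cu$, via the canonical splitting $Hom^\bullet(-, X \oplus Y) \cong Hom^\bullet(-,X) \oplus Hom^\bullet(-,Y)$ and its dual. For pushouts and pullbacks, I would produce them from the explicit mapping (co)cone constructions on $Com(A)$: given $f:X \to Y$, the standard mapping cone $\co(f)$ fits into a square with corners $X, Y, 0, \co(f)$, and one checks this is a pushout in $\cu$ by computing $Map(\co(f), Z)$ as the homotopy pullback of $Map(Y,Z) \to Map(X,Z) \leftarrow Map(0,Z)$, using the explicit description of $Hom^\bullet(\co(f), Z)$.

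Third and most substantively, I would prove the pushout/pullback coincidence. This reduces to showing that the canonical square with corners $X, Y, 0, \co(f)$ is simultaneously a pushout and a pullback, i.e.\ that the natural map from $X$ to the $\infty$-categorical pullback $0 \times_{\co(f)} Y$ is an equivalence. By the dual of the previous step, this pullback is modeled by the shifted mapping fiber of $Y \to \co(f)$, and a direct computation (equivalently, the rotation of the standard distinguished triangle in $K(A)$) shows this is chain-homotopy equivalent to $X$. Invertibility of the suspension is automatic since shift and desuspension are both explicit functors on $Com(A)$.

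The main obstacle is identifying the $\infty$-categorical (co)limits in $\cu$ with the concrete $1$-categorical (co)cone constructions in $Com(A)$; this is not purely formal but follows from the Dold--Kan computation of mapping spaces together with a direct verification of the universal property on the level of homotopy groups of mapping spaces. Once this identification is in place, the coincidence of pushouts and pullbacks is essentially a repackaging of the rotation of the standard distinguished triangles in $K(A)$.
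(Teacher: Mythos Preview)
The paper does not actually prove this proposition: it is stated with a citation to \cite{ha}, Proposition~1.3.2.10, and no argument is supplied. So there is no proof in the paper to compare your proposal against.

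That said, your outline is a reasonable direct verification of stability, and it is in the spirit of how one would unpack Lurie's argument by hand. The identification of $h(\cu)$ with $K(A)$ via the Dold--Kan description of mapping spaces is correct and is exactly the content of the second sentence of the proposition. Your strategy of exhibiting the mapping cone as both a cofiber and (after rotation) a fiber is the right one. The one place I would tighten is the passage from ``$Hom^\bullet(\co(f),Z)$ has the right description'' to ``the square is an $\infty$-categorical pushout'': you need to know that a square in a simplicial-nerve is a (co)limit diagram precisely when the induced square of mapping spaces is a homotopy pullback for every test object, and this uses that the simplicial enrichment on $Com(A)$ is locally Kan (which the paper notes just before the proposition, citing \cite{ha}, Corollary~1.3.2.12) together with the general comparison of homotopy (co)limits in fibrant simplicial categories with $\infty$-categorical (co)limits in their coherent nerves. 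You flag this as the main obstacle, which is accurate; once that comparison is in place, the rest of your argument goes through.
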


There is also a well-developed theory of localizations of stable $\infty$-categories.

\begin{defi}
Let $\cu$ be a stable $\infty$-category. 
Let $\du \subset \cu$ be a full stable subcategory. Then the bottom right vertex in a pushout diagram
\begin{center}
\begin{tikzcd}
\cu \arrow[r]\arrow[d] & \du\arrow[d] \\
pt  \arrow[r] & \cu/\du 
\end{tikzcd}
\end{center}
is called the localization of $\cu$ by $\du$. 
\end{defi}

By Proposition 5.14 of \cite{highalgkth} the homotopy category $h(\cu/\du)$ is canonically equivalent to the Verdier quotient $h(\cu)/h(\du)$.

\subsection{Additive quasi-categories}

\begin{defi}
An $\infty$-category $\au$ is called additive if its homotopy category is additive. 
\end{defi}

In particular, the nerve of a classical additive category is an additive $\infty$-category. 
Moreover, any stable $\infty$-category is additive. 

Let $\au$, $\bu$ be $\infty$-categories that admit finite products. Then the category $Fun_{add}(\au, \bu)$ is the full 
subcategory of $Fun(\au,\bu)$ 
whose objects are product-preserving functors. 
The subcategory of $Cat_{\infty}$ whose objects are additive $\infty$-categories and morphisms are product preserving 
(i.e. additive) functors is denoted by $Cat^{add}_{\infty}$. 

Now we introduce the notions of an idempotent-complete additive $\infty$-category, the Karoubization of an additive $\infty$-category, 
and the small envelope of an additive category. 
An example to keep in mind is the category of free modules over a ring $R$. It is always additive but it is idempotent complete if and 
only if every projective module over $R$ is free. 
The Karoubization of this category is the category of projective modules whereas its small envelope is the category of stably free modules. 

\begin{defi}\label{krclod}
An additive infinity-category $\au$ is called idempotent-complete (or absolutely Karoubi-closed) if its homotopy category is idempotent 
complete, that is every idempotent $X \stackrel{p}\to X$ in $h(\au)$ has the form
$X \cong X_1 \oplus X_2 \stackrel{\begin{pmatrix} \id_{X_1} & 0 \\ 0 & 0 \end{pmatrix}}\to X_1 \oplus X_2 \cong X$ 
for some $X_1,X_2 \in \obj \au$. 

There exists a universal additive functor $\au \to \kar(\au)$ to an idempotent complete additive $\infty$-category called the 
Karoubization of $\au$ (see Proposition 5.1.4.2 of \cite{ha}). 

The full subcategory of $\kar(\au)$ whose objects are such $X$ that there exist $X', Y \in \obj \au$ with $X\oplus X' \cong Y$ is called the small envelope 
of $\au$. 

A full subcategory $\bu$ of an additive infinity-category $\au$ is called Karoubi-closed (in $\bu$) if any objects $X,Y \in \obj \bu$ such that 
$X\oplus Y \in \obj \au$ also belong to $\obj \bu$.
\end{defi}

%

\begin{pr}[\cite{sag}, Proposition C.1.5.7, Remark C.1.5.9]\label{addspectral}
For an additive $\infty$-category $\au$ 
the functor $\Omega^{\infty}:Fun_{add}(\au^{op}, \spt^{cn}) \to Fun_{add}(\au^{op}, \ssets)$ is an equivalence. 
In particular, there is a fully faithful functor $j:\au\to Fun(\au^{op},\spt)$ such that the usual Yoneda embedding functor is the composition of 
$\Omega^{\infty}$  with $j$.
\end{pr}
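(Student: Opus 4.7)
The plan is to reduce to the fundamental equivalence $\spt^{cn} \simeq CMon^{gp}(\ssets)$ between connective spectra and grouplike $E_{\infty}$-monoids in pointed spaces, under which $\Omega^{\infty}$ becomes the forgetful functor (see e.g.\ Remark 5.2.6.26 of \cite{ha}). Under this identification the claim becomes that the forgetful map $Fun_{add}(\au^{op}, CMon^{gp}(\ssets)) \to Fun_{add}(\au^{op}, \ssets)$ is an equivalence, i.e., that every product-preserving presheaf of spaces on an additive $\infty$-category uniquely upgrades to a functor valued in grouplike $E_{\infty}$-monoids.

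First I would handle the semi-additive case: for any $\infty$-category $\bu$ with finite products and a zero object, product-preserving presheaves $F:\bu^{op} \to \ssets$ automatically carry an $E_{\infty}$-monoid structure. For each $X \in \obj \bu$, the fold map $X \sqcup X = X \times X \to X$ in $\bu$ (which coincides with the codiagonal because $\bu$ is semi-additive) pulls back to a multiplication $F(X) \times F(X) \cong F(X \times X) \to F(X)$. The higher coherences are automatic by a Segal-style argument, since the product-preserving condition on $F$ matches exactly the Segal condition for an $E_{\infty}$-monoid indexed by the diagrams $X,\, X\times X,\, X\times X\times X,\ldots$ inside $\bu^{op}$.

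Next I would promote semi-additivity to additivity to obtain groupliness. Since $h(\au)$ is an additive category, for each $X$ the additive inverse $-\id_X$ exists in $h(\au)(X,X)$ and induces a self-map of $F(X)$ that serves as a homotopy inverse for the $E_{\infty}$-multiplication built above. Equivalently, the shear automorphism $\left(\begin{smallmatrix}\id & \id \\ 0 & \id\end{smallmatrix}\right)$ of $X \oplus X$ is invertible in $h(\au)$, so $F$ sends it to an equivalence realizing the shear map on $F(X) \times F(X)$ --- exactly the groupliness criterion for the monoid $F(X)$.

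For the Yoneda statement, the classical $\infty$-categorical Yoneda embedding $\au \to Fun(\au^{op}, \ssets)$ from Propositions 5.1.3.1 and 5.1.3.2 of \cite{htt} lands in $Fun_{add}(\au^{op}, \ssets)$ since representables preserve products; composing with the inverse of the equivalence just constructed and with the fully faithful inclusion $\spt^{cn} \hookrightarrow \spt$ produces $j$, which inherits full faithfulness from the classical Yoneda. The main obstacle is the semi-additive upgrade in the second paragraph: making precise that product-preservation already encodes all higher commutativity data requires either the $\Gamma$-space formalism with an explicit comparison, or the universal property of $CMon(\ssets)$ as the free semi-additive presentable $\infty$-category on a point --- and this is essentially the technical heart of the cited Proposition C.1.5.7.
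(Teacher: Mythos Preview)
The paper does not give a proof of this proposition at all; it is quoted verbatim from \cite{sag} (Proposition C.1.5.7 and Remark C.1.5.9) and used as a black box. Your sketch is a correct outline and in fact follows the same strategy as Lurie's argument in the cited reference: identify $\spt^{cn}$ with grouplike $E_\infty$-spaces, show that product-preserving functors out of a semi-additive $\infty$-category automatically lift to commutative monoid objects (the Segal-condition observation), and then use additivity of $h(\au)$ to upgrade to grouplike via the shear map.

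One small imprecision worth cleaning up: in your second paragraph you state the hypothesis on $\bu$ as ``finite products and a zero object'' but immediately use the identification $X \sqcup X = X \times X$ and the fold/codiagonal map. That identification requires $\bu$ to be semi-additive (finite biproducts), not merely pointed with finite products. This does not affect the application, since $\au$ is additive and hence semi-additive, but the hypothesis as written is too weak for the argument you give.
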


For any objects $X,Y$ of an additive $\infty$-category $\au$ we denote the spectrum object $j(X)(Y)$ by $\map(X,Y)$.

For any additive $\infty$-category $\au$ we denote by $Fun^{fin}(\au^{op},\spt)$ the minimal full subcategory of $Fun_{add}(\au^{op},\spt)$ containing the image of the Yoneda 
embedding functor $\au \stackrel{j}\to Fun_{add}(\au^{op},\spt)$ and closed under finite limits and colimits.






\subsection{Weight structures}

Now we recall the definition and some basic properties of weight structures.

\begin{defi}\label{dwstr}
A pair of subclasses $C_{w\le 0},C_{w\ge 0}\subset\obj C$ 
will be said to define a weight
structure $w$ for a triangulated category  $C$ if 
they  satisfy the following conditions:

(i) $C_{w\ge 0},C_{w\le 0}$ are 
Karoubi-closed in $C$. 

(ii) {\bf Semi-invariance with respect to translations.}

$C_{w\le 0}\subset C_{w\le 0}[1]$, $C_{w\ge 0}[1]\subset
C_{w\ge 0}$.

(iii) {\bf Orthogonality.}

$C_{w\le 0}\perp C_{w\ge 0}[1]$.

(iv) {\bf Weight decompositions}.

 For any $M\in\obj C$ there
exists a distinguished triangle
\begin{equation}\label{wd}
X\to M\to Y
{\to} X[1]
\end{equation} 
such that $X\in C_{w\le 0},\  Y\in C_{w\ge 0}[1]$.

\end{defi}

The main example of a weight structure is $C=K(A)$, the homotopy category of complexes over an additive category. In this case 
$K(A)_{w\le 0}$ (resp., $K(A)_{w\ge 0}$) is the class of complexes homotopy equivalent to complexes concentrated in non-positive (resp. 
non-negative) degrees. The weight decomposition axiom is then given by the stupid filtrations of a complex. Unlike the case of t-structures 
already in this simple example weight decompositions are not functorial and not even unique. 
Moreover, bounded from below, from above, or from both sides complexes also give examples of categories with weight structures. These categories are denoted by $K^+(A), K^-(A)$, and  $K^b(A)$, respectively.

Another example that we keep in mind is the spherical weight structure on the stable homotopy category $SH$. 
The classes $SH_{w\le 0}$ and $SH_{w\ge 0}$ are defined as the minimal subcategories containing the sphere spectrum, closed 
under extensions, 
under taking small coproducts, and under taking the negative (resp. positive) triangulated shift. 
This weight structure restricts to the subcategory of compact objects $SH^c$. We refer to section 4.6 of \cite{bws} for details about 
this example. 

\begin{notat}
\begin{itemize}
\item
The full subcategory $\hw\subset \cu$ whose objects are 
$C_{w=0}=C_{w\ge 0}\cap C_{w\le 0}$ 
 will be called the {\it heart} of 
$w$.


\item
 $C_{w\ge i}$ (resp. $C_{w\le i}$, resp.
$C_{w= i}$) will denote $C_{w\ge
0}[i]$ (resp. $C_{w\le 0}[i]$, resp. $C_{w= 0}[i]$).

\item
 The class $C_{w\ge i}\cap C_{w\le j}$ will be denoted 
by $C_{[i,j]}$. 

$C^b\subset C$ is the full subcategory of $C$ whose objects are $\cup_{i,j\in \z}C_{[i,j]}$.

\item
 We  say that $(C,w)$ is {\it  bounded}  if $C^b=C$.

\item
 Let $C$ and $C'$ 
be triangulated categories endowed with
weight structures $w$ and
 $w'$, respectively; let $F:C\to C'$ be an exact functor.

$F$ will be called {\it left weight-exact} 
(with respect to $w,w'$) if it maps
$C_{w\le 0}$ into $C'_{w'\le 0}$; it will be called {\it right weight-exact} if it
maps $C_{w\ge 0}$ into $C'_{w'\ge 0}$. $F$ is called {\it weight-exact}
if it is both left 
and right weight-exact.

\item
 Let $H$ be a 
full subcategory of a triangulated category $C$.

We will say that $H$ is {\it negative} if
$\obj H\perp (\cup_{i>0}\obj (H[i]))$. 
\end{itemize}
\end{notat}

In this paper we will mostly focus on weight structures on the homotopy category of a stable $\infty$-category $\cu$. 
Sometimes we will call a weight structure on $h(\cu)$ just a weight structure on $\cu$. 

\begin{rema}\label{wtrstuff}
\begin{enumerate}
\item
Let $w$ be a bounded weight structure on $h(\cu)$. Then the heart of $w$ generates $h(\cu)$ as a triangulated category or, equivalently,  
$\cu$ is the minimal subcategory of $\cu$ containing objects of $\hw$ and closed under finite limits and colimits (see Corollary 1.5.7 of \cite{bws}). 

\item
The heart of a weight structure is a negative subcategory by definition. Moreover, any negative subcategory $H$ that generates $h(\cu)$ as a triangulated category yields a bounded weight structure whose heart is equivalent to the small envelope of $H$ (see Definition 4.3.1 and Theorem 4.3.2(II.2) of \cite{bws}). 

Consequently, a functor $F$ between $\cu$ and $\cu'$ with bounded weight structures $w$ and $w'$, respectively, 
is weight exact if and only if $F$ maps objects of the heart of $w$ into objects of the heart of $w'$.

\end{enumerate}
\end{rema}

\begin{notat}
\begin{itemize}
\item 
Let $w,w'$ be bounded weight structures on $h(\cu)$ and $h(\cu')$ for a stable $\infty$-category $\cu'$. Then we denote by $Fun_{w.ex}(\cu,\cu')$ the full subcategory of $Fun_{ex}(\cu,\cu')$ whose objects are functors such that their associated functor on the homotopy categories is weight exact. 

\item
We denote by $WCat^{st}_{\infty}$ the simplicial subcategory of $Cat_{\infty}$ whose objects are small stable infinity categories together with a weight structure and the simplicial subset of morphisms between $(\cu,w)$ and $(\cu',w')$ is $Fun_{w.ex}(\cu,\cu')$. 
The full subcategory of $WCat^{st}_{\infty}$ whose objects are stable categories with bounded weight structures is denoted by $WCat^{st,b}_{\infty}$.

\end{itemize}
\end{notat}

\section{Reminder on K-theory spectra}\label{2}
In the section we recall the definitions of the K-theory spectra and state their main properties.

The connective algebraic K-theory spectrum of an exact category (and in particular, of an additive category) was first introduced by Quillen in \cite{quillen}. 
In \cite{waldhausen} Waldhausen defines the formalism of categories with cofibrations and weak equivalences (nowadays called 
Waldhausen categories) and constructs the K-theory spectrum using the so-called $S$-construction. Any exact category gives rise to a 
Waldhausen category and in this case the Waldhausen K-theory spectrum is shown to be homotopy equivalent to Quillen's K-theory 
spectrum (see Appendix 1.9 in ibid.). 

The non-connective K-theory of schemes was studied in \cite{kthder} (see chapter 6). 
Later the non-connective K-theory spectrum of a Frobenius pair (and, in particular, of an exact category) was defined in \cite{schlichting} (see 11.4). 
For a Frobenius pair associated to an exact category its connective cover coincides with the connective Quillen K-theory spectrum. 
Finally, in \cite{highalgkth}  the connective and nonconnective K-theory spectra of a stable $\infty$-category were defined in sections 7 and 9, 
respectively. 

Now we give the definition of the nonconnective K-theory spectrum. 
For this it is important to recall the following statement, usually called the Eilenberg swindle. 

\begin{pr}[\cite{baralgk}, Proposition 8.1]\label{eilswi}
Let $\cu$ be a stable $\infty$-category such that all countable products exist in $\cu$. 
Then $\kthe^{con}(\cu)$ is contractible. 
\end{pr}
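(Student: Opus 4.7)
The plan is to run the classical Eilenberg swindle in the $\infty$-categorical setting, leveraging the additivity theorem for connective K-theory of stable $\infty$-categories.

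First I would use the existence of countable products to define an exact endofunctor $\Sigma : \cu \to \cu$ by the formula $\Sigma(X) = \prod_{n \ge 1} X$. Exactness (preservation of finite limits and colimits) follows because products commute with limits and, in a stable $\infty$-category, a countable product of exact cofiber sequences is again a cofiber sequence (cofibers are computed via fibers, and products of fibers are fibers). Reindexing ``pull off the first factor'' gives a natural equivalence $\Sigma(X) \simeq X \times \Sigma(X)$, and since $\cu$ is stable this product is simultaneously a coproduct, yielding a fiber--cofiber sequence of objects $X \to \Sigma(X) \to \Sigma(X)$ which I would promote to an exact sequence of endofunctors
\[
\id_{\cu} \longrightarrow \Sigma \longrightarrow \Sigma
\]
in $Fun_{ex}(\cu,\cu)$.

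Next I would invoke the additivity theorem for connective K-theory of stable $\infty$-categories (the $\infty$-categorical refinement of Waldhausen additivity proved in the same reference \cite{highalgkth} where $\kthe^{con}$ is defined). Applied to the exact sequence above, it yields an equivalence of maps of connective spectra
\[
\kthe^{con}(\Sigma) \simeq \kthe^{con}(\id_{\cu}) + \kthe^{con}(\Sigma)
\]
in the abelian monoid $\pi_0 \operatorname{Map}(\kthe^{con}(\cu),\kthe^{con}(\cu))$, and in fact compatibly at the level of mapping spaces.

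Finally, because $\kthe^{con}(\cu)$ is a connective spectrum, the set $[\kthe^{con}(\cu),\kthe^{con}(\cu)]$ of homotopy classes of spectrum maps is an abelian group, so I can cancel the $\kthe^{con}(\Sigma)$ summand to conclude $\kthe^{con}(\id_{\cu}) \simeq 0$. But $\kthe^{con}(\id_{\cu})$ is the identity map of $\kthe^{con}(\cu)$, so the identity is nullhomotopic and the spectrum is contractible.

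The main technical obstacle is genuinely the second step: producing the cofiber sequence of functors $\id \to \Sigma \to \Sigma$ as a coherent diagram $\Delta^2 \to Fun_{ex}(\cu,\cu)$ (rather than just pointwise on objects), so that Barwick's additivity theorem applies. This amounts to giving the reindexing isomorphism $\prod_{n \ge 1} X \simeq X \times \prod_{n \ge 2} X$ a natural functorial enhancement, which is formal once one models $\Sigma$ as the limit of the constant $\mathbb{N}$-indexed diagram in $Fun(\cu,\cu)$ and uses the standard ``shift'' functor on $\mathbb{N}$.
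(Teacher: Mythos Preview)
The paper does not give its own proof here; the proposition is stated with a citation to \cite{baralgk}, Proposition~8.1, and left at that. Your argument is the standard Eilenberg swindle via the additivity theorem---define the infinite product endofunctor, obtain the split cofiber sequence $\id \to \Sigma \to \Sigma$ of exact endofunctors from the reindexing isomorphism, apply additivity, and cancel---which is exactly how Barwick proves it in the cited reference, so the proposal is correct and there is nothing further to compare.
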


As before, let $\cu$ be a small stable $\infty$-category. 
The idea is to embed $\cu$ into a stable category whose K-theory is trivial, take the question, and then iterate the procedure. 
The natural stable category with this property where we can embed $\cu$ is the category of ind-objects $Ind(\cu)$. 
However, this category is usually large, and to get a small category we will take the full subcategory $(Ind(\cu))^{\kappa}$ 
of $Ind(\cu)$ 
whose objects are $\kappa$-compact objects, where $\kappa$ is one's favorite uncountable cardinal. 
Clearly the full embedding $\cu \to Fun(\cu^{op},Sets)$ factors through this subcategory. 
Moreover, the category $(Ind(\cu))^{\kappa}$ is stable (Corollary 1.1.3.6 of \cite{ha}). Denote by 
$\Sigma^1(\cu)$ the localization $(Ind(\cu))^{\kappa}/\cu$. 
Inductively, we define $\Sigma^n(\cu)$ as $\Sigma(\Sigma^{n-1}(\cu))$. 
By \ref{eilswi} $\kthe((Ind(\cu))^{\kappa})$ is homotopy equivalent to the point. 
By functoriality properties of the connective K-theory we have the following commutative diagram of spectra
\begin{center}
\begin{tikzcd}
\kthe^{con}(\Sigma^n(\cu)) \arrow[r]\arrow[d] & \kthe^{con}(Ind(\Sigma^n(\cu))^{\kappa})\cong pt\arrow[d] \\
pt \arrow[r] & \kthe^{con}(\Sigma^{n+1}(\cu))
\end{tikzcd}
\end{center}

Since $\Omega\kthe^{con}(\Sigma^{n+1}(\cu))$ is the homotopy pullback in the diagram above, 
there is a canonical map $\kthe^{con}(\Sigma^n(\cu)) \to \Omega\kthe^{con}(\Sigma^{n+1}(\cu))$. 
We define the non-connective K-theory spectrum $\kthe(\cu)$ as the colimit of these maps 
$\operatorname{colim}\limits_{n\in \mathbb{N}} \kthe^{con}(\Sigma^n(\cu))$.


The main property of the non-connective algebraic K-theory spectrum is the following localization theorem. 

\begin{theo}[\cite{highalgkth},  9.8]\label{localizat}
Let $\cu_1 \to \cu \to \cu_2$ be an exact sequence of idempotent complete stable $\infty$-categories, that is 
$\cu_1 \to \cu_2$ is a full embedding, the composition is trivial and the induced map $Kar(\cu/\cu_1) \to \cu_2$ is an equivalence. 

Then the sequence
$\kthe(\cu_1) \to \kthe(\cu) \to \kthe(\cu_2)$ is a cofiber sequence.
\end{theo}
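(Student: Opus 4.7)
The plan is to build the non-connective cofiber sequence out of a connective ``almost-cofiber'' sequence by applying the $\Sigma$-construction $\cu \mapsto \Sigma\cu := (Ind(\cu))^{\kappa}/\cu$ used in the definition of $\kthe$. The key inputs are (a) a localization theorem for \emph{connective} K-theory that produces a fiber sequence with a controlled failure concentrated in $\pi_0$, and (b) the fact that $\Sigma$ takes exact sequences to exact sequences, while the delooping process shifts this failure into lower and lower degrees.

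For (a), the first step is to invoke the connective additivity/fibration theorem in the style of Waldhausen and Barwick: for any exact sequence $\cu_1 \to \cu \to \cu'$ of stable $\infty$-categories --- \emph{without} assuming idempotent completeness of $\cu'$ --- the induced sequence $\kthe^{con}(\cu_1) \to \kthe^{con}(\cu) \to \kthe^{con}(\cu')$ is a fiber sequence in positive degrees, and the map $\pi_0 \kthe^{con}(\cu) \to \pi_0 \kthe^{con}(\cu')$ is surjective. Combining this with Thomason's cofinality theorem applied to the Karoubi completion $\cu' \to \kar(\cu') = \cu_2$, which states that this map is a $\kthe^{con}$-equivalence in positive degrees and identifies $\kthe_0(\cu')$ with a subgroup of $\kthe_0(\cu_2)$, yields a connective ``cofiber sequence'' up to a discrepancy concentrated in $\pi_0$ of the form $\kthe_0(\cu_2)/\kthe_0(\cu')$.

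For (b), the functor $\Sigma$ preserves exact sequences of idempotent complete stable $\infty$-categories: the triple $\Sigma \cu_1 \to \Sigma \cu \to \Sigma \cu_2$ is again exact, in the sense that the first map is fully faithful and the Karoubi envelope of the naive Verdier quotient agrees with $\Sigma \cu_2$. Since $\kthe(\cu_i) \simeq \operatorname{colim}_n \Omega^n \kthe^{con}(\Sigma^n \cu_i)$ by definition, and the $\pi_0$ discrepancy appearing at stage $n$ is promoted, under the delooping $\kthe^{con}(\Sigma^n\cu_i) \to \Omega \kthe^{con}(\Sigma^{n+1}\cu_i)$, into a class in degree $-1$ at stage $n+1$, the obstruction migrates to arbitrarily negative degrees and vanishes in the colimit. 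One then assembles the resulting termwise ``almost-cofiber'' sequences into a single honest cofiber sequence $\kthe(\cu_1) \to \kthe(\cu) \to \kthe(\cu_2)$ of non-connective spectra.

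The main obstacle is rigorously checking that $\Sigma$ is compatible with exact sequences in the required sense: one needs that the inclusion $\cu_1 \hookrightarrow \cu$ extends to a fully faithful embedding $(Ind(\cu_1))^{\kappa} \hookrightarrow (Ind(\cu))^{\kappa}$ of $\kappa$-compact ind-objects, and that the induced map on the resulting quotients agrees, after Karoubization, with $\Sigma\cu_2$. This is precisely the point at which the hypothesis of idempotent completeness of $\cu_1,\cu,\cu_2$ and the choice of a sufficiently large regular cardinal $\kappa$ enter in an essential way.
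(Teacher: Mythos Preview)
The paper does not give its own proof of this statement: it is quoted verbatim as Theorem~9.8 of \cite{highalgkth} and used as a black box (notably in the proof of Theorem~\ref{mainwih}). So there is no in-paper argument to compare against.

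That said, your outline is essentially the strategy of \cite{highalgkth}: Waldhausen/Barwick-style fibration for connective K-theory together with Thomason cofinality to control the $\pi_0$ defect, plus the fact that the suspension $\cu \mapsto \Sigma\cu$ preserves exact sequences of idempotent complete stable $\infty$-categories, so that taking the colimit defining $\kthe$ pushes the defect off to $-\infty$. You have also correctly located the nontrivial technical point, namely verifying that $\Sigma$ preserves exact sequences (this is the content of the lemmas preceding 9.8 in \cite{highalgkth}, and is where idempotent completeness and the choice of $\kappa$ are genuinely used). As a sketch of the cited proof your proposal is accurate; just be aware that in the context of the present paper no argument is expected.
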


\begin{notat}
\begin{itemize}
\item
For an $\infty$-category $\cu$ 
we denote by $\kthe_i(\cu)$ the $i$-th homotopy groups of the spectrum $\kthe(\cu)$. 

Note that the K-theory spectrum in \cite{highalgkth} is defined to be Morita invariant. 
So be warned that $\kthe_0(h(\cu))$ (i.e. $\kthe_0$ of the triangulated category $h(\cu)$) does not coincide 
with $\kthe_0(\cu) = \pi_0(\kthe(\cu))$ unless $\cu$ is idempotent complete.

By $\kthe(-)$ we always mean the nonconnective K-theory spectrum. We denote the connective K-theory spectrum by $\kthe^{con}$.

\item
For an additive $\infty$-category $\au$ we denote by $\kthe(\au)$ the spectrum $\kthe(Fun^{fin}(\au^{op},\spt))$. 
If $H$ is a classical additive category $\kthe(H)$ is defined to be $\kthe(\underline{Nerve}(Com^b(H)))$. Later we will see that $\kthe(H)$ is equivalent to $\kthe(Nerve(H))$. 

\end{itemize}
\end{notat}

\section{Weight complex functor}\label{3}
From now on $\cu$ will be a stable $\infty$-category together with a weight structure $w$ on $h(\cu)$.

\begin{defi}
The $\infty$-heart $\hwoo$ of $w$ is the full subcategory of $\cu$ whose objects are those of $\hw$. 
 It is an additive 
$\infty$-category.
\end{defi}

\begin{lem}\label{adjstadd}
For any additive $\infty$-category $\au$ and any stable $\infty$-category $\cu$ 
the restriction functor 
$Fun^L(Fun_{add}(\au^{op}, \spt),\cu) \to Fun_{add}(\au,\cu)$ is an equivalence. 
\end{lem}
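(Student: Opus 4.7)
The strategy is to exhibit $Fun_{add}(\au^{op}, \spt)$ as the free cocomplete stable $\infty$-category generated by the additive $\infty$-category $\au$, so that the lemma becomes a combination of two classical universal properties: that of non-abelian additive cocompletion and that of stabilization.

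First I would use Proposition~\ref{addspectral} to identify $Fun_{add}(\au^{op}, \spt)$ with $Sp(P)$, where $P := Fun_{add}(\au^{op}, \ssets)$. Indeed, that proposition says $\Omega^{\infty}$ restricts to an equivalence $Fun_{add}(\au^{op}, \spt^{cn}) \simeq P$, and passing to spectrum objects on both sides gives $Fun_{add}(\au^{op}, \spt) \simeq Sp(P)$. Under this identification the Yoneda embedding $j \colon \au \to Fun_{add}(\au^{op}, \spt)$ factors as the non-abelian Yoneda $\au \hookrightarrow P$ followed by the suspension-spectrum functor $\Sigma^{\infty}_{+} \colon P \to Sp(P)$. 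Moreover, since $\au$ is additive, $\au^{op}$ has finite products equal to its finite coproducts, so $P = Fun_{add}(\au^{op}, \ssets)$ coincides with the $\infty$-category of finite-product-preserving functors, i.e.\ with $P_{\Sigma}(\au)$ in the notation of \cite{htt}.

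I would then apply two known universal properties. By Proposition 5.5.8.15 and Corollary 5.5.8.17 of \cite{htt}, restriction along $\au \hookrightarrow P$ induces an equivalence $Fun^L(P, \mathcal{D}) \simeq Fun_{add}(\au, \mathcal{D})$ for any cocomplete $\infty$-category $\mathcal{D}$ (using that finite-coproduct-preserving functors out of an additive category into an additive target are exactly additive functors). By the universal property of stabilization (Corollary 1.4.4.5 of \cite{ha}, formally extended from the presentable to the cocomplete setting), restriction along $\Sigma^{\infty}_{+} \colon P \to Sp(P)$ induces an equivalence $Fun^L(Sp(P), \cu) \simeq Fun^L(P, \cu)$ for any cocomplete stable $\cu$. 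Composing the two equivalences yields $Fun^L(Fun_{add}(\au^{op}, \spt), \cu) \simeq Fun_{add}(\au, \cu)$.

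The main technical obstacle is the bookkeeping needed to verify that the composite equivalence coincides with the prescribed restriction functor along $j$, rather than a twisted variant. This reduces to the compatibility of the two Yoneda-type embeddings noted in the first step, which follows from the naturality of the $\Sigma^{\infty}_{+} \dashv \Omega^{\infty}$ adjunction on $P$. A minor side issue is extending the universal property of stabilization from the presentable to the merely cocomplete setting, which can be handled directly using that $Sp(P)$ is a sequential limit of copies of $P$ under the loops functor.
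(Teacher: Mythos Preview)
Your proposal is correct and follows essentially the same two-step approach as the paper: first reduce to $Fun^L(P,\cu) \simeq Fun_{add}(\au,\cu)$ via the universal property of stabilization (Corollary 1.4.4.5 of \cite{ha}, together with Remark C.1.5.9 of \cite{sag} to identify $Fun_{add}(\au^{op},\spt)$ with $Sp(P)$), and then invoke the universal property of $P = P_\Sigma(\au)$ (the paper cites Proposition 5.3.6.2 of \cite{htt} rather than 5.5.8.15/5.5.8.17, but these encode the same universal property). Your worry about extending the stabilization universal property from the presentable to the merely cocomplete setting is unnecessary, since $P = P_\Sigma(\au)$ is already presentable by Proposition 5.5.8.10 of \cite{htt}.
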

\begin{proof}
By Remark C.1.5.9 of \cite{sag} 
we can identify $Fun_{add}(\au^{op}, \spt)$ with the stabilization of $Fun_{add}(\au^{op}, \ssets)$. 
Hence by Corollary 1.4.4.5 of \cite{ha} the restriction functor 
$$Fun^L(Fun_{add}(\au^{op}, \spt), \cu) \to Fun^L(Fun_{add}(\au^{op}, \ssets), \cu)$$ 
is an equivalence.

Now let $\mathcal{K}$ be the collection of all small simplicial sets 
and $\mathcal{R}$ be the collection of all maps from finite discrete simplicial sets to $\au$.  
Then by Proposition 5.3.6.2(2) of \cite{htt} applied to this setting of 
the restriction functor 
$$Fun^L(P(\au), \cu) \to Fun_{add}(\au, \cu)$$ 
is also an equivalence. 
\end{proof}


We now prove that a stable $\infty$-category with a bounded weight structure is determined by its heart.

\begin{pr}\label{adeqw}
Let $w$ be bounded.

1. The essential image of the composition functor  
$F':\cu \stackrel{j}\to Fun_{ex}(\cu^{op}, \spt) \stackrel{res}\to Fun_{add}(\hwoo^{op}, \spt)$ lies in 
the full subcategory $Fun^{fin}(\hwoo^{op}, \spt)$ and the functor $\cu \stackrel{F}\to Fun^{fin}(\hwoo^{op}, \spt)$ is an equivalence of 
$\infty$-categories.

2.
Let $\cu'$ be a stable $\infty$-category with a bounded weight structure $w'$. Then the restriction functor 
$Fun_{w.ex}(\cu,\cu') \to Fun_{add}(\hwoo,\underline{Hw'}_{\infty})$ 
is an equivalence of $\infty$-categories.
\end{pr}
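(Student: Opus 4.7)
The plan is to establish part (1) first and then deduce part (2) via Lemma \ref{adjstadd}.

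For (1), I first verify that $F'$ preserves finite limits and colimits. Restriction along $\hwoo^{op} \hookrightarrow \cu^{op}$ preserves all pointwise (co)limits, and the spectrum-valued Yoneda embedding $j$ preserves finite limits by general principles and finite colimits because a cofiber sequence in the stable category $\cu$ maps to a pointwise cofiber/fiber sequence of spectra. For $X \in \hwoo$, the object $F'(X)$ is the restriction of the representable $j(X)$ to $\hwoo^{op}$; identifying this with the representable presheaf $j_{\hwoo}(X)$ from Proposition \ref{addspectral} shows $F'(X) \in Fun^{fin}(\hwoo^{op}, \spt)$. Since Remark \ref{wtrstuff}(1) says $\cu$ is generated by $\hwoo$ under finite limits and colimits, and $Fun^{fin}$ is closed under these by definition, the essential image of $F'$ lies in $Fun^{fin}(\hwoo^{op}, \spt)$, giving the desired factorization $F$.

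To show $F$ is an equivalence, I argue it is fully faithful and essentially surjective. Fix $Y \in \cu$ and consider the class $\mathcal{C}_Y \subseteq \cu$ of $X$ such that $\cu(X,Y) \to Fun^{fin}(F(X), F(Y))$ is an equivalence. For $X \in \hwoo$, the Yoneda lemma inside $Fun_{add}(\hwoo^{op},\spt)$ yields $Fun^{fin}(F(X),F(Y)) \simeq F(Y)(X) \simeq \cu(X,Y)$, using that $Fun^{fin}$ is a full subcategory, so $\hwoo \subseteq \mathcal{C}_Y$. Both $\cu(-,Y)$ and $Fun^{fin}(F(-),F(Y))$ convert finite colimits in the first variable into limits of spectra (with $F$ preserving the relevant (co)limits), so $\mathcal{C}_Y$ is closed under finite limits and colimits; boundedness of $w$ then forces $\mathcal{C}_Y = \cu$. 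Essential surjectivity is automatic: the image of $F$ contains every representable $j_{\hwoo}(X)$ with $X \in \hwoo$ and is closed under finite (co)limits, so by the minimality clause in the definition of $Fun^{fin}$ it is the entire target.

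For (2), applying (1) to $\cu'$ identifies the target with $Fun^{fin}(\underline{Hw'}_{\infty}^{op}, \spt)$, and the restriction functor is well-defined by Remark \ref{wtrstuff}(2). To construct the inverse: given an additive $G : \hwoo \to \underline{Hw'}_{\infty}$, compose with the inclusion $\underline{Hw'}_{\infty} \hookrightarrow \cu'$ to get $\tilde G : \hwoo \to \cu'$; Lemma \ref{adjstadd} provides a unique colimit-preserving extension $\hat G : Fun_{add}(\hwoo^{op}, \spt) \to \cu'$. Restricting $\hat G$ to $Fun^{fin}(\hwoo^{op}, \spt) \simeq \cu$ produces a functor that preserves finite colimits between stable categories, hence is exact, and whose restriction to $\hwoo$ factors through $\underline{Hw'}_{\infty}$, making it weight-exact by Remark \ref{wtrstuff}(2). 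That this assignment is genuinely inverse to restriction follows from the uniqueness clause of Lemma \ref{adjstadd} combined with part (1).

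The main obstacle I anticipate is promoting these object-level arguments to an equivalence of $\infty$-categories of functors, particularly for part (2) where the construction must be functorial in $G$ at the simplicial level. This is absorbed into the universal property of Lemma \ref{adjstadd} (and Corollary 1.4.4.5 of \cite{ha} behind it), which outputs an equivalence of functor $\infty$-categories rather than a bijection on equivalence classes; combining this with the full-subcategory inclusion $Fun^{fin} \hookrightarrow Fun_{add}$ then yields the claimed equivalence of mapping spaces.
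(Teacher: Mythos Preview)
Your argument for part~(1) is essentially the paper's: exactness of $F'$, the identification $F'(X)\simeq j_{\hwoo}(X)$ for $X\in\hwoo$, and a bootstrap from the heart to all of $\cu$ via Remark~\ref{wtrstuff}(1). One point the paper makes explicit and you leave implicit is \emph{why} $F'(X)$ agrees with the $\hwoo$-representable: the spectrum $\map_\cu(Y,X)$ for $X,Y\in\hwoo$ is connective by the orthogonality axiom~(iii) of weight structures, and only then does Proposition~\ref{addspectral} let you identify it with the connective spectral lift of $Map_{\hwoo}(Y,X)$. Without that connectivity the appeal to Proposition~\ref{addspectral} is unjustified.

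For part~(2) your strategy is again the paper's, but there is a gap in your use of Lemma~\ref{adjstadd}. That lemma concerns $Fun^L(Fun_{add}(\hwoo^{op},\spt),-)$, i.e.\ functors preserving \emph{small} colimits, so the target must be cocomplete. Your $\cu'$ is a small stable category with a bounded weight structure and has only finite colimits; the colimit-preserving extension $\hat G:Fun_{add}(\hwoo^{op},\spt)\to\cu'$ you posit therefore does not exist. The paper handles exactly this point by taking the presentable target $SP(\underline{Hw'}_\infty)=Fun_{add}(\underline{Hw'}_\infty^{op},\spt)$ instead, applying Lemma~\ref{adjstadd} there, and then separately proving via compact generation (Propositions 5.1.4.9, 5.3.5.10, 5.3.5.14 of \cite{htt}) that restriction induces an equivalence $Fun^L(SP(\hwoo),SP(\underline{Hw'}_\infty))\simeq Fun_{ex}(SP^{fin}(\hwoo),SP(\underline{Hw'}_\infty))$. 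Only after this does it match the full subcategories of weight-exact functors on one side and heart-valued additive functors on the other. Your sketch collapses these two steps and thereby applies the lemma outside its hypotheses; once you insert the embedding of $\cu'$ into $SP(\underline{Hw'}_\infty)$ and the Ind-completion step, your argument becomes the paper's.
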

\begin{proof}
1.
Let $X,Y$ be objects of $\hwoo$. 
By definition $F'$ maps $X$ to the functor $\map_{\cu}(-,X)$ restricted to the subcategory $\hwoo^{op}$ of $\cu^{op}$. 
By the axiom (iii) of weight structures $\map_{\cu}(-,X)$ is a connective spectrum. Clearly, $\Omega^{\infty}\map_{\cu}(-,X) = Map_{\cu}(-,X) = 
j(X)$. 
By Proposition \ref{addspectral} the map 
$Map_{Fun_{add}(\hwoo^{op}, \spt)}(F'(X),F'(Y)) \stackrel{\Omega^{\infty}}\to Map_{Fun_{add}(\hwoo^{op}, \ssets)}(j(X),j(Y))$ 
 is an 
equivalence. By the Yoneda Lemma the map 
$Map_{\cu}(X,Y) \to Map_{Fun_{add}(\hwoo^{op}, \spt)}(j(X),j(Y))$ 
is an equivalence. 
Since $\Omega^{\infty} \circ F'|_{\hwoo} = j$  the map  $Map_{\cu}(X,Y) \to Map_{Fun_{add}(\hwoo^{op}, \spt)}(F'(X),F'(Y))$ is also an equivalence. Now it is clear that $F'|_{\hwoo}$ is a full embedding whose essential image is equal to $j(\hwoo)$. 

By Proposition 1.1.4.1 of \cite {ha} $F'$ commutes with finite limits and finite colimits. By Remark \ref{wtrstuff}(1) any object of $\cu$ can be obtained from objects of $\hw_{\infty}$ by taking finite limits and colimits. Hence the essential image of $F'$ lies in $Fun^{fin}(\hwoo^{op}, \spt)$.

Since any object of $Fun^{fin}(\hwoo^{op}, \spt)$ can be obtained from objects of $\hw_{\infty}$ by taking finite limits and colimits it suffices to prove that 
$F$ is a full embedding. 
The morphism $\map_{\cu}(X,Y) \stackrel{F_{X,Y}}\to \map_{Fun^{fin}(\hwoo^{op}, \spt)}(F(X),F(Y))$ 
is an equivalence for any $(X,Y) \in \obj \hwoo^{op} \times \hwoo$. Since any object of $\cu^{op} \times \cu$ can be obtained 
from objects of $\hwoo^{op} \times \hwoo$ by taking finite limits and finite colimits and $F$ is exact, the morphism $F_{X,Y}$ is an equivalence for any $(X,Y) 
\in \obj \cu^{op} \times \cu$, which means that $F$ is a full embedding, and thus, is an equivalence. 


2.
To simplify notations for any additive category $\au$ we denote  the category $Fun_{add}(\au^{op}, \ssets)$ by $P(\au)$ and  $Fun^{(fin)}_{add}(\au^{op}, \spt)$  by $SP^{(fin)}(\au)$. 

By assertion 1 and since equivalences induce equivalences of the corresponding functor categories $Fun(-,-)$ (Proposition 1.2.7.3 of \cite{htt}) we can assume $\cu = SP^{fin}(\hw)$, $\cu' = SP^{fin}(\underline{Hw'}_{\infty})$. 

By Proposition 5.5.8.10(6) of \cite{htt} together with Proposition 1.4.3.7 of \cite{ha} the $\infty$-category $SP(\hwoo)$ is 
compactly generated. 
Proposition 5.3.4.17 of \cite{htt} implies that the subcategory of compact objects is equivalent to the Karoubization of $SP^{fin}(\hwoo)$. 
By definition of a compactly generated $\infty$-category $Ind(SP(\hwoo)^c) = SP(\hwoo)$ (see 5.5.7.1 of \cite{htt}). 
Now the restriction functor 
$$Fun^L(Ind(SP^{fin}(\hwoo)), SP(\underline{Hw'}_{\infty})) \to Fun_{ex}(SP^{fin}(\hwoo), SP(\underline{Hw'}_{\infty}))$$ 
is an equivalence of categories by Proposition 5.1.4.9, Proposition 5.3.5.10, and Proposition 5.3.5.14 of \cite{htt}.

The following diagram commutes
\begin{center}
\begin{tikzcd}
Fun^L(SP(\hwoo), SP(\underline{Hw'}_{\infty})) \arrow[r,"\cong"]\arrow[rd,"\cong"] & 
Fun_{ex}(SP^{fin}(\hwoo), SP(\underline{Hw'}_{\infty})) \arrow[d]\\
& Fun_{add}(\hwoo, SP(\underline{Hw'}_{\infty}))
\end{tikzcd}
\end{center}
where arrows are restriction functors. Above we've proved that the top horizontal functor is an equivalence. 
Moreover, by Lemma \ref{adjstadd} the diagonal functor is an equivalence. 
Hence the functor  
$$Fun_{ex}(SP^{fin}(\hwoo), SP(\underline{Hw'}_{\infty}))\stackrel{Res}\to Fun_{add}(\hwoo, SP(\underline{Hw'}_{\infty}))$$ 
is also an equivalence.


We denote the full embedding  $\underline{Hw'}_{\infty} \to SP(\underline{Hw'}_{\infty})$ by $i$ and the full embedding 
$SP^{fin}(\underline{Hw'}_{\infty}) \to SP(\underline{Hw'}_{\infty})$ by $i'$. 
Let $\mathcal{K} \subset Fun_{ex}(SP^{fin}(\hwoo), SP(\underline{Hw'}_{\infty}))$ be the full subcategory whose objects are 
functors $G$ such that $Res(G) \cong i \circ U$ for some $U:\hwoo\to \underline{Hw'}_{\infty}$. 
Since $Res$ is an equivalence the map $\mathcal{K} \to Fun_{add}(\hwoo, \underline{Hw'}_{\infty})$ is also an equivalence. 


Certainly, the full subcategory $Fun_{w.ex}(SP^{fin}(\hwoo), SP^{fin}(\underline{Hw'}_{\infty}))$ is a subset of $\mathcal{K}$. But also the converse is true. Indeed, let $G$ be a functor 
$SP^{fin}(\hwoo) \to SP(\underline{Hw'}_{\infty})$ such that the image of any object of $\hwoo$ is an object of $\underline{Hw'}_{\infty}$. 
By definition any object $X$ of $SP^{fin}(\hwoo)$ can obtained via a finite combination of limits and colimits from objects of $\hwoo$. 
Since the functor $G$ is exact,  $G(X)$ is an object of the subcategory $SP^{fin}(\underline{Hw'}_{\infty})$. 
So we obtain a functor $G':SP^{fin}(\hwoo) \to SP^{fin}(\underline{Hw'}_{\infty})$ and it's clearly weight exact. 

Now the restriction map from 
$$Fun_{w.ex}(SP^{fin}(\hwoo), SP^{fin}(\underline{Hw'}_{\infty})) \cong \mathcal{K}' = \mathcal{K}$$ 
to $Fun_{add}(\hwoo, \underline{Hw'}_{\infty})$ is an equivalence.

\end{proof}



\begin{coro}\label{wstrares}
The functor $WCat^{st,b}_{\infty} \to Cat^{add}_{\infty}$ that sends a stable $\infty$-category with a bounded weight structure to 
its heart, is a full embedding of categories enriched over quasi-categories. The essential image consists of those additive categories whose homotopy categories coincide with their small envelope. 
\end{coro}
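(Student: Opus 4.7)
The plan is to combine Proposition \ref{adeqw} with Remark \ref{wtrstuff}(2).

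For the full embedding statement, recall that a simplicial functor between categories enriched over quasi-categories is a full embedding precisely when the induced map on mapping quasi-categories is a categorical equivalence. For the functor sending $(\cu, w)$ to $\hwoo$, the map of mapping spaces is the restriction $Fun_{w.ex}(\cu, \cu') \to Fun_{add}(\hwoo, \underline{Hw'}_{\infty})$, which is an equivalence by Proposition \ref{adeqw}(2). This immediately yields the full embedding assertion.

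For the characterization of the essential image, I first argue that it is contained in the described class. Given $(\cu, w) \in WCat^{st,b}_{\infty}$, I want $\hw = h(\hwoo)$ to equal its own small envelope. By Proposition \ref{adeqw}(1), $\cu \simeq Fun^{fin}(\hwoo^{op}, \spt)$, and under this identification $\hwoo$ corresponds to the Yoneda image, which is a negative generating subcategory of $h(\cu)$. Applying Remark \ref{wtrstuff}(2) to it produces a bounded weight structure on $h(\cu)$ whose heart is the small envelope of $\hw$. By the uniqueness of a bounded weight structure determined by its heart (both $C_{w\le 0}$ and $C_{w\ge 0}$ are recovered as Karoubi- and extension-closures of shifts of $\hw$), this newly produced weight structure coincides with $w$, so the small envelope of $\hw$ equals $\hw$.

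Conversely, take an additive $\infty$-category $\au$ such that $h(\au)$ equals its own small envelope. Set $\cu := Fun^{fin}(\au^{op}, \spt)$, which is stable by construction. The spectral Yoneda embedding $j:\au \to Fun_{add}(\au^{op}, \spt)$ of Proposition \ref{addspectral} factors through $\cu$ by definition of $Fun^{fin}$, and its image in $h(\cu)$ is a negative subcategory: for $X,Y \in \obj\au$, $\map_\cu(jX,jY)$ is equivalent to the connective spectrum $\map_\au(X,Y)$, so higher $\extr$-groups vanish. This image generates $h(\cu)$ as a triangulated category again by definition of $Fun^{fin}$, so Remark \ref{wtrstuff}(2) equips $h(\cu)$ with a bounded weight structure whose heart is the small envelope of $h(\au)$, which equals $h(\au)$ by hypothesis. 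The resulting $\infty$-heart $\hwoo$ is the full $\infty$-subcategory of $\cu$ with the same objects as $\hw$, that is, $\au$ itself via $j$.

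The main obstacle is the uniqueness step in the forward inclusion: one needs to invoke that a bounded weight structure on $h(\cu)$ is determined by its heart. This is a classical fact from Bondarko's theory (the classes $C_{w\le0}$ and $C_{w\ge0}$ are explicitly reconstructed from $\hw$ as iterated Karoubi- and extension-closures), but it is the only nontrivial input beyond Propositions \ref{adeqw} and \ref{addspectral} and Remark \ref{wtrstuff}(2).
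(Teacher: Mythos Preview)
Your proposal is correct and follows essentially the same approach as the paper: Proposition \ref{adeqw}(2) for fully faithfulness, and Remark \ref{wtrstuff}(2) together with Proposition \ref{addspectral} for the converse inclusion of the essential image.

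The only place you diverge is the forward inclusion, where you take an unnecessary detour. The paper simply cites Theorem 4.3.2(II.2) of \cite{bws} for the statement that the heart of any bounded weight structure equals its own small envelope. You instead pass through Proposition \ref{adeqw}(1), then reapply Remark \ref{wtrstuff}(2) to $\hw$ viewed as a negative generating subcategory, and finally invoke uniqueness of bounded weight structures with a given generating heart to conclude that the resulting weight structure coincides with $w$. This works, but the uniqueness you flag as ``the main obstacle'' is itself part of the very same theorem in \cite{bws} that Remark \ref{wtrstuff}(2) cites; there is no extra input needed, and the passage through Proposition \ref{adeqw}(1) is superfluous for this direction.
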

\begin{proof}
The functor is a full embedding by Proposition \ref{adeqw}(2). 

All the additive categories in the image coincide with their small envelope by Theorem 4.3.2(II.2) of \cite{bws}. 
Conversely, let $\hu$ be an additive infinity-category. The functor $\hu \stackrel{j}\to Fun^{fin}(\hu^{op},\spt)$ is a full embedding by Proposition \ref{addspectral}. 
The subcategory $h(j(\hu))$ is negative and it generates $h(Fun^{fin}(\hu^{op},\spt))$. So, by Remark \ref{wtrstuff}(2) there exists a bounded weight structure on $Fun^{fin}(\hu^{op},\spt)$ whose $\infty$-heart is equivalent to the small envelope of $j(\hu)$. 
\end{proof}

\begin{coro}\label{wecom}
Let $w$ be bounded. Then there exists an exact functor $\cu \stackrel{t}\to Com^{b}(\hw)$ unique up to homotopy that induces 
a weight exact functor $h(\cu) \stackrel{h(t)}\to K^b(\hw)$ that is the identity restricted to the hearts of the weight structures. 

The functor $h(\cu) \stackrel{t}\to K^b_w(\hw) \to K^b_w(\hw)$ is the weight complex functor in the sense of \cite{bws}(3) (where $K^b_w(\hw)$ is the weak homotopy category of complexes). 

For any stable $\infty$-category with a bounded weight structure $w'$ and an exact functor $\cu \stackrel{G}\to \cu'$ the following diagram commutes 
$$\begin{CD} 
\cu @>{t}>> Com^{b}(\hw) \\ 
 @VV{G}V  @VV{}V \\
\cu' @>{t}>> Com^{b}(\underline{Hw'}) \\
\end{CD}$$
\end{coro}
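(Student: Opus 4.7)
The plan is to apply Proposition~\ref{adeqw}(2) to the target $\cu' = \underline{Nerve}(Com^b(\hw))$, so I first need to verify that this category fits the framework. It is stable by the proposition stated just before Section~1.3, and carries the standard bounded weight structure whose non-positive (resp.\ non-negative) class consists of complexes homotopy equivalent to ones concentrated in non-positive (resp.\ non-negative) degrees, as recalled after Definition~\ref{dwstr}. Its $\infty$-heart consists of objects equivalent to complexes concentrated in degree zero; since for any two such complexes the mapping complex in the dg-category $Com^b(\hw)$ is discrete and equals $\hw(X,Y)$, the corresponding mapping space in $\underline{Nerve}(Com^b(\hw))$ is discrete, so this $\infty$-heart is canonically equivalent to $Nerve(\hw)$.

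Second, I would produce the canonical additive $\infty$-functor $\tau: \hwoo \to Nerve(\hw)$ as the unit of the adjunction $h \dashv Nerve$ (identity on objects, $\pi_0$-projection on mapping spaces). Proposition~\ref{adeqw}(2) then gives an equivalence
$$Fun_{w.ex}(\cu, \underline{Nerve}(Com^b(\hw))) \xrightarrow{\simeq} Fun_{add}(\hwoo, Nerve(\hw)),$$
and I define $t$ to be the essentially unique weight-exact preimage of $\tau$. Uniqueness up to homotopy is immediate from this equivalence, and $h(t)$ restricts to the identity on hearts by construction. Naturality in $G:\cu\to\cu'$ is obtained by applying the same equivalence to the pair $(\cu, \underline{Nerve}(Com^b(\underline{Hw'})))$: both compositions in the square restrict on hearts to $\hwoo \xrightarrow{h(G)|_{\hw}} \underline{Hw'}_\infty \to Nerve(\underline{Hw'})$, so they coincide up to canonical equivalence.

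To identify $h(t)$ with Bondarko's weak weight complex functor I would invoke its characterization from \cite{bws}(3): any exact functor $h(\cu) \to K_w^b(\hw)$ restricting to the identity on $\hw$ and compatible with weight decompositions in the appropriate weak sense coincides with it. The functor $h(t)$ is exact and restricts to the identity on hearts by construction, and sends weight-decomposition triangles in $\cu$ to triangles in $\underline{Nerve}(Com^b(\hw))$ between complexes concentrated in suitable degree ranges. The main obstacle I expect is this last step: the abstractly produced $t$ a priori only remembers the heart, while Bondarko's construction is built from explicit choices of weight decompositions, so matching differentials requires tracing through the equivalence of Proposition~\ref{adeqw}(2) to check that the image of each weight tower is sent to the associated stupid filtration up to weak homotopy. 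The remaining pieces of the argument (existence, uniqueness, and naturality) reduce cleanly to the content of Proposition~\ref{adeqw}(2) once the target is correctly identified.
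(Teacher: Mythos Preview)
Your proposal is correct and follows essentially the same route as the paper: produce the unit $\hwoo \to Nerve(\hw)$ of the $h \dashv Nerve$ adjunction and pull it back through the equivalence of Proposition~\ref{adeqw}(2) (the paper phrases this via its immediate consequence, Corollary~\ref{wstrares}). Your treatment is in fact more careful than the paper's in spelling out why the $\infty$-heart of $\underline{Nerve}(Com^b(\hw))$ is $Nerve(\hw)$ and in arguing naturality; the point you flag as the main obstacle---matching $h(t)$ with Bondarko's weak weight complex---is handled in the paper by the single remark that $t$ is compatible with weight Postnikov towers, which is exactly the compatibility with weight decompositions you describe.
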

\begin{proof}
Consider the functor $u:Cat^{add}_{\infty} \stackrel{Nerve(h(-))}\to Cat^{add}_{\infty}$ which is the unit 
$\id_{Cat^{add}_{\infty}} \to Nerve(h(-))$ 
of the adjunction 
\begin{tikzcd}
Cat^{add}_{\infty} \arrow[r,shift left=.5ex,"h"]& Cat^{add}\arrow[l,shift left=.5ex,"Nerve"]
\end{tikzcd}.
By the equivalence from Corollary \ref{wstrares} we know that the restriction map 
$Fun_{w.ex}(\cu,Com^{b}(\hw)) \to Fun_{add}(\hwoo,Nerve(\hw))$ 
is an equivalence of infinity-categories for any $\cu$ with a bounded weight structure $w$.  
In particular there exists a functor $\cu \stackrel{t}\to Com^{b}(\hw)$ whose restriction to the hearts is $u$. 
The functor  $h(\cu) \stackrel{t}\to K^b(\hw) \to K^b_w(\hw)$ is isomorphic to the functor defined in \cite{bws}(3) because $t$ is compatible with weight Postnikov towers. 

Moreover, the functor $\au \to Nerve(h(\au))$ is the unique functor up to homotopy that induces the identity functor $h(\au) \to h(\au)$. 
Thus the uniqueness of $t$ follows.

\end{proof}

\begin{rema}
The corollary above solves Conjecture 3.3.3 of \cite{bws} for triangulated categories with a bounded weight structure having an $\infty$-categorical enhancement. 

Moreover, it enables us to solve the conjecture for enhanced triangulated categories with a compactly-generated weight structure. 
Let $C$ be a $\kappa$-compactly generated triangulated category with a compactly generated weight structure $w$ on it (i.e. the heart contains the set of compact generators of the category). 
We assume that it has an $\infty$-categorical model $\cu$. By Remark 1.4.4.3 $\cu$ is compactly generated and 
in particular, the functor $Ind(\cu^c) \stackrel{e}\to \cu$ is an equivalence. 
By Proposition 5.3.5.14 of \cite{htt} the functor $e$ commutes with colimits. 
By Proposition 1.1.3.6 of \cite{ha} the category $Ind(\cu^c)$ is stable and hence the functor $e$ is exact. The triangulated functor 
$h(Ind(\cu^c)) \stackrel{h(e)}\to C$ is now an equivalence since it induces an equivalence on the subcategories of compact objects. 

Using Proposition 5.3.5.10 of \cite{htt} again and Corollary \ref{wecom} we obtain a functor $C \cong h(Ind(\cu^c)) \to h(Ind (Com^b(\hw^c))) \cong h(Com(\hw)) \hookrightarrow K(\hw)$. 
\end{rema}

\section{Theorems of the heart}\label{4}

Now we want to relate the K-theory of $\cu$ and the K-theory of its heart $\hw$. 
Unlike the situation with t-structures, the $\infty$-heart $\hw_{\infty}$ might have non-trivial higher homotopy groups.
So, an $\infty$-category theorist would say that the correct statement of the theorem of the heart should be the following

\begin{coro}[The stupid theorem of the heart for weight structures]\label{stthoftheheart}
There is a canonical homotopy equivalence $\kthe(\hwoo) \to \kthe(\cu)$.
\end{coro}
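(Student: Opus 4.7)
The plan is to observe that this corollary is essentially a direct translation of Proposition \ref{adeqw}(1) through the K-theory functor. Unwinding the notation fixed in Section \ref{2}, the spectrum $\kthe(\hwoo)$ is defined as $\kthe(Fun^{fin}(\hwoo^{op},\spt))$. So to produce the desired equivalence, it suffices to exhibit a canonical equivalence of stable $\infty$-categories $\cu \simeq Fun^{fin}(\hwoo^{op},\spt)$ and then apply the nonconnective K-theory functor, which, being built out of manipulations (ind-completion, $\kappa$-compact objects, localization, connective $\kthe$-spectrum, and colimit of deloopings) that are each invariant under equivalence of stable $\infty$-categories, sends equivalences to homotopy equivalences of spectra.

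First, I would invoke Proposition \ref{adeqw}(1), which under the standing assumption that $w$ is bounded produces precisely the equivalence $F:\cu \stackrel{\sim}\to Fun^{fin}(\hwoo^{op},\spt)$ via the Yoneda embedding restricted along $\hwoo \hookrightarrow \cu$. Applying $\kthe$ yields a homotopy equivalence $\kthe(\cu) \to \kthe(Fun^{fin}(\hwoo^{op},\spt)) = \kthe(\hwoo)$, and inverting gives the stated map. There is essentially no obstacle beyond Proposition \ref{adeqw}(1); the only point worth spelling out is that this map is \emph{canonical} in the expected sense, namely that it is induced by the inclusion $\hwoo \hookrightarrow \cu$. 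This is immediate from the construction, since $F$ restricted to $\hwoo$ is the Yoneda embedding $j:\hwoo \to Fun^{fin}(\hwoo^{op},\spt)$, so the inverse of $\kthe(F)$ identifies the map arising from $\hwoo \hookrightarrow \cu$ with the identity map on $\kthe(\hwoo)$.
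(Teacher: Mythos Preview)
Your proposal is correct and takes essentially the same approach as the paper: the paper's entire proof is the single remark that since $\cu \cong Fun^{fin}(\hwoo^{op}, \spt)$ by Proposition \ref{adeqw}(1), the statement is obvious from the definition $\kthe(\hwoo) = \kthe(Fun^{fin}(\hwoo^{op},\spt))$. You have simply spelled out the same argument in more detail.
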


Since we already know that $\cu \cong Fun^{fin}(\hwoo^{op}, \spt)$ (see Proposition \ref{adeqw}(1)) the statement is obvious
\footnote{The author was informed that Ernie Fontes had been working on a related statement. Unfortunately, the author couldn't get any 
precise information about the work or the statement}.


However, we still want to compare $\kthe(\cu)$ and $\kthe(\hw)$ because $\kthe(\hw)$ is a priori something easier.

Note that in general the spectrum $\kthe(\hwoo)$ is quite distinct from $\kthe(\hw)$. 
For example, the category of compact objects of the stable homotopy category $SH^{c}$ possesses a weight structure whose $\infty$-heart $
\hwoo$ is the additive subcategory generated by the sphere spectrum. So $\kthe(\hwoo) \cong \kthe(\mathbb{S}-mod^{fin}) = \kthe(\mathbb{S})$. 
The classical heart of this weight structure is equivalent to the category of finitely generated free $\z$-modules, so $\kthe(\hw) \cong 
\kthe(\mathbb{Z})$. 
The groups $\kthe_i(\mathbb{S})$ and $\kthe_i(\mathbb{Z})$ are well-known to not be isomorphic. For instance, it was shown 
that the $p$-torsion of $\kthe_i(\mathbb{S})$ contains the $p$-torsion of $\pi_i(\mathbb{S})$ for an odd prime $p$ (Theorem 1.2 in \cite{homksph}, see also \cite{algkss}). 

However, in some cases $\kthe_i(\hw)$ is actually isomorphic to $\kthe_i(\hwoo)$. 
Consider the map of K-theory spectra $\kthe(\cu) \to \kthe(\hw)$ induced by the weight complex functor (constructed in Corollary 
\ref{wecom}). 
First note that for $i=0$ the induced map $\pi_0(\kthe(\cu)) \to \kthe_0(Kar(h(\cu)))$ is an isomorphism. 
This together with Theorem 5.3.1 of \cite{bws} implies the following. 

\begin{pr}\label{amap}
The map of K-theory spectra $\kthe(\cu) \to \kthe(\hw)$ induces isomorphism in $\pi_0$.
\end{pr}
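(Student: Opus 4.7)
The strategy is to reduce the statement to Bondarko's classical theorem on $\kthe_0$ by using the compatibility of the $\infty$-categorical weight complex functor with the weak weight complex functor on the triangulated level. Concretely, I would argue in three steps.

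First, I would identify the source. Since the K-theory spectrum in \cite{highalgkth} is Morita invariant, we have a canonical identification $\pi_0(\kthe(\cu)) \cong \kthe_0(Kar(h(\cu)))$; this is exactly the remark made in the notational convention after Theorem \ref{localizat}. Similarly $\pi_0(\kthe(\hw)) = \pi_0(\kthe(\underline{Nerve}(Com^b(\hw)))) \cong \kthe_0(Kar(K^b(\hw)))$, the Grothendieck group of the idempotent completion of the bounded homotopy category $K^b(\hw)$.

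Second, I would identify the target of the map $\pi_0(\kthe(\cu)) \to \pi_0(\kthe(\hw))$ with the map appearing in Theorem 5.3.1 (equivalently Theorem 8.1.1) of \cite{bws}, which asserts that for a bounded weight structure the weak weight complex functor induces an isomorphism $\kthe_0(Kar(h(\cu))) \xrightarrow{\sim} \kthe_0(Kar(K^b(\hw)))$. By Corollary \ref{wecom} the functor $h(t) : h(\cu) \to K^b(\hw)$ obtained from our strong $\infty$-categorical weight complex functor $t$ agrees, after composing with $K^b(\hw) \to K^b_w(\hw)$, with Bondarko's weak weight complex functor. Since the map on $\kthe_0$ factors through $K^b(\hw)$ and the weak category $K^b_w(\hw)$ has the same $\kthe_0$ as $K^b(\hw)$ (only morphisms, not objects, are modified), the two induced maps on $\kthe_0$ coincide.

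Third, invoking Bondarko's theorem, this composed map is an isomorphism, which gives the desired conclusion. The only subtle point, and the one I would expect to check carefully, is the comparison in step two: one needs that the naturality of the identification $\pi_0 \kthe(-) \cong \kthe_0(Kar(h(-)))$ transforms the map induced by $t$ into the map induced by the weak weight complex functor. This is a matter of tracing through the construction of $\kthe_0$ of a stable $\infty$-category (generators given by objects, relations coming from cofiber sequences, which are exactly distinguished triangles in $h(\cu)$), together with the observation in Corollary \ref{wecom} that $h(t)$ is the classical weight complex functor. Once this identification is in place, the proposition follows immediately from Bondarko's result.
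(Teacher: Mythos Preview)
Your proposal is correct and follows exactly the paper's own argument: the paper's entire proof is the sentence preceding the proposition, namely the identification $\pi_0(\kthe(\cu)) \cong \kthe_0(\kar(h(\cu)))$ together with an appeal to Theorem~5.3.1 of \cite{bws}. Your three steps simply spell out this reasoning in more detail; the only superfluous bit is the detour through $K^b_w(\hw)$ in step two---since $\kthe_0$ depends only on objects and distinguished triangles, and $h(t)$ already lands in $K^b(\hw)$, you can compare directly with Bondarko's map without invoking the weak category (which, as the introduction notes, is not even triangulated).
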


Now we generalize this result to all the negative K-groups.

\begin{theo}[The theorem of the heart for weight structures in negative K-theory]\label{mainwih}
The map $\kthe_i(\cu) \to \kthe_i(\hw)$ is an isomorphism for $i \le 0$.
\end{theo}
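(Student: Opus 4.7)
The case $i=0$ is Proposition \ref{amap}. For $i<0$, the plan is to bootstrap from the $i=0$ case using the Bass-style delooping that defines non-connective K-theory in Section \ref{2}. Concretely, for any small stable $\infty$-category $\mathcal{A}$ one has $\kthe_{-n}(\mathcal{A}) \cong \pi_0 \kthe^{con}(\Sigma^n \mathcal{A})$, with $\Sigma \mathcal{A} = (Ind(\mathcal{A}))^{\kappa}/\mathcal{A}$, because Proposition \ref{eilswi} kills the middle term of the localization cofiber sequence \ref{localizat}. Since $Ind(-)$, passage to $\kappa$-compact objects, and Verdier quotients are all functorial in exact functors, the weight complex $t:\cu\to Com^b(\hw)$ of Corollary \ref{wecom} extends to a compatible system $\Sigma^n t: \Sigma^n \cu \to \Sigma^n Com^b(\hw)$, and it suffices to prove each $\Sigma^n t$ induces an isomorphism on $\pi_0 \kthe^{con}$.

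\paragraph{Inductive step.} To do this I would equip $(Ind\,\cu)^\kappa$ with an adjacent weight structure obtained by closing $\cu_{w\le 0}$ and $\cu_{w\ge 0}$ under small filtered colimits, then descend it along the Verdier quotient to a weight structure on $\Sigma \cu$, and iterate. The same construction on the target produces matching weight structures on each $\Sigma^n Com^b(\hw)$, and $\Sigma^n t$ is weight-exact and induces an equivalence on the \emph{classical} hearts at each level (after carefully identifying these hearts with filtered-colimit completions of $\hw$ modulo appropriate subcategories). The main claim of the induction is then that a weight-exact functor between stable $\infty$-categories that is an equivalence on classical hearts still induces an iso on $\pi_0 \kthe^{con}$ even when the weight structures are no longer bounded but merely coproduct-closed. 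Granting this, we obtain an iso on $\pi_0 \kthe^{con}(\Sigma^n(-))$ at every stage, and Step 1 translates it into the desired iso $\kthe_{-n}(\cu) \to \kthe_{-n}(\hw)$.

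\paragraph{Main obstacle.} The main difficulty is precisely the last claim: extending Proposition \ref{amap} (which rests on Bondarko's Theorem 5.3.1 of \cite{bws}, valid for \emph{bounded} weight structures) to the unbounded, coproduct-closed weight structures that appear on the suspensions $\Sigma^n \cu$. The hearts of these weight structures are enlargements of $\hw$ by filtered-colimit completion and subsequent Verdier quotient, and a direct generalization of Bondarko's $\kthe_0$ theorem to this setting must either be proved by hand or bypassed by a continuity argument realising $\kthe((Ind\,\cu)^\kappa)$ and $\kthe(\Sigma^n\cu)$ as filtered colimits of K-theories of subcategories carrying bounded weight structures, to which the $i=0$ theorem does apply. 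An alternative route is to run a five-lemma between the pair of localization cofiber sequences $\kthe(\cu)\to\kthe((Ind\,\cu)^\kappa)\to\kthe(\Sigma\cu)$ and its counterpart for $Com^b(\hw)$, exploiting that the middle spectra are connectively contractible by Proposition \ref{eilswi}; the delicate point there is checking that the two cofiber sequences are genuinely compatible with the weight-complex-induced comparison in a way that is strong enough to transfer an iso on $\pi_{-n+1}$ of the outer terms to an iso on $\pi_{-n}$, closing the induction.
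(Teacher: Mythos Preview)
Your overall architecture---decreasing induction from $i=0$, a localization fiber sequence with an acyclic middle term, and a five-lemma---is exactly the shape of the paper's proof. The gap is the choice of enlargement. By passing to $(Ind\,\cu)^\kappa$ you land in a category whose natural weight structure is \emph{unbounded}, and you correctly diagnose that this blocks the use of Proposition~\ref{amap} (and Bondarko's $\kthe_0$ theorem) at the inductive step. Your proposed workarounds (continuity, or an extension of the $\kthe_0$ theorem to coproduct-closed weight structures) are not obviously available and you do not carry them out, so as written the argument does not close.

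The paper avoids the obstacle by choosing a much smaller middle category. Let $\hwoo^{big}$ be the full subcategory of $Fun_{add}(\cu^{op},\spt)$ spanned by \emph{countable} direct sums of objects of $\hwoo$, and let $\cu^{big}$ be the stable subcategory it generates (closure under finite limits and colimits). Two things happen at once: (a) every object of $\cu^{big}$ still admits a countable self-coproduct, which is all the Eilenberg swindle needs, so $\kthe(\cu^{big})\simeq pt$ and likewise $\kthe(Com^b(\hw^{big}))\simeq pt$; (b) since $\hwoo^{big}$ is a negative generating subcategory, Remark~\ref{wtrstuff}(2) gives $\cu^{big}$ a \emph{bounded} weight structure with heart $\hwoo^{big}$. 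Moreover, by Theorem~8.1.1 of \cite{bws} the Verdier quotient $\cu^{big}/\cu$ (and its target-side analogue) again carries a bounded weight structure, with heart $\hw^{big}/\hw$, and the induced functor $t'$ is weight-exact and the identity on hearts. Now the inductive hypothesis applies \emph{directly} to $\kthe_{i+1}(t')$, and the long exact sequences with vanishing middle terms finish the step. So the fix to your proof is simply to replace $(Ind\,\cu)^\kappa$ by $\cu^{big}$; everything else you wrote then goes through without needing any unbounded theory.
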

\begin{proof}
The proof goes by decreasing induction over $i$. For $i =0$ the statement follows from Proposition \ref{amap}. 

Assume now the theorem is known for $n\ge i+1$. 
Denote by $\hwoo^{big}$ the full subcategory of $Fun(\cu^{op}, \spt)$ that contains $\bigoplus\limits_{i\in \mathbb{N}} X_i$ for any sequence 
of objects $X_i$ of $\hwoo$. Next denote by $\cu^{big}$ the smallest full subcategory of $Fun(\cu^{op}, \spt)$ containing $\hwoo^{big}$ and closed 
under taking finite limits and colimits. 
For any object $X$ of $\hwoo^{big}$ the coproduct $\bigoplus_{i \in \mathbb{N}} X$ exists in $\cu^{big}$. 
Indeed, let $X$ be an object of $\cu^{big}$. For some $n$ its shift $\Sigma^n X$ is a colimit of objects of $\hwoo^{big}$. Coproducts 
commute with colimits and with $\Sigma^n$ (since $\Sigma$ is an equivalence), hence the coproduct $\bigoplus_{i \in \mathbb{N}} X$ also exists in $
\cu^{big}$. 
The proof of Proposition 8.1 of \cite{baralgk} only uses the existence of such coproducts in the category. So, it yields that $
\kthe(\cu^{big})$ is homotopy equivalent to the point.
Denote by $\hw^{big}$ the homotopy category of $\hwoo^{big}$. Using the same argument we obtain that $\kthe(Com^b(\hw^{big}))$ is homotopy equivalent to the point. 
By definition the category $\cu^{big}$ admits a weight structure whose heart is $\hwoo^{big}$. 

Now we use Corollary \ref{wecom} to form the diagram 
$$\begin{CD} 
\cu @>{}>> \cu^{big} @>{}>> \kar(\cu^{big}/\cu)\\ 
 @VV{t_{\cu}}V @VV{t_{\cu^{big}}}V @VV{t'}V \\
Com^b(\hw) @>{}>> Com^b(\hw^{big}) @>{}>> \cu' = \kar(Com^b(\hw^{big})/Com^b(\hw))\\
\end{CD}$$
By Theorem \ref{localizat} it induces the following diagram of K-groups whose rows are exact sequences 
$$\begin{CD} 
\kthe_{i+1}(\cu^{big}) @>{}>> \kthe_{i+1}(\kar(\cu^{big}/\cu)) @>{d_1}>>  \kthe_i(\cu) @>{}>> \kthe_i(\cu^{big})\\ 
           @VV{}V                                      @VV{\kthe_{i+1}(t')}V                    @VV{\kthe_i(t_{\cu})}V                  @VV{}V \\
\kthe_{i+1}(\hw^{big}) @>{}>> \kthe_{i+1}(\cu') @>{d_2}>> \kthe_i(\hw) @>{}>> \kthe_i(\hw^{big}) \\
\end{CD}$$
By Theorem 8.1.1 of \cite{bws} $h(\cu^{big}/\cu)$ and $h(\cu')$ admit weight structures with the heart $\frac{\hw^{big}}{\hw}$. The functor 
$t'$ is weight-exact and it induces an 
identity functor on the hearts. 
One may notice that $t_{\cu'} \circ t' \cong t_{\cu^{big}/\cu}$ (see the uniqueness statement in Corollary \ref{wecom}) 
where $t_{\cu^{big}/\cu}$ and $t_{\cu'}$ are corresponding weight complex functors. 
 Then by the inductive assumption $\kthe_{i+1}(t_{\cu'})$ and $\kthe_{i+1}(t_{\cu^{big}/\cu})$ are isomorphisms, hence so is $\kthe_{i+1}(t')$.  
Since $\kthe_n(\cu^{big}) \cong \kthe_n(\hw^{big}) \cong 0$ for any $n$  the maps $d_1$ and $d_2$ are also isomorphisms. 
Hence $\kthe_i(t_{\cu})$ is also an isomorphism.

\end{proof}

\begin{rema}
1. Theorem \ref{mainwih} can be seen as a generalization of Theorem 9.53 of \cite{highalgkth} from additive $\infty$-categories generated by 
one object to general additive $\infty$-categories. Moreover, modulo Corollary \ref{stthoftheheart} or the main result of \cite{schwedeshipley} 
one could derive our result from their theorem and from the fact that K-theory commutes with filtered colimits. The ideas of the two proofs are 
essentially the same.

2. Presumably the category $Com^b(\hw^{big})/Com^b(\hw)$ appearing in the proof of \ref{mainwih} is equivalent to 
$Com^b(\hw^{big}/\hw)$. That is, not only $\kthe_{i+1}(t')$ is an isomorphism but also $t'$ itself is an equivalence. 
However, since the proof of this fact is unnecessary and requires some work on localizations of triangulated categories, we don't include it 
into the exposition.

\end{rema}





\section{Negative K-theory of motivic categories}\label{5}
The groups $\kthe_{i}(\hw)$ are much easier to compute than $\kthe_{i}(\cu)$. 
Indeed, let $A$ be an idempotent complete additive category. For any object $M \in \obj A$ the full additive subcategory generated by $M$ is equivalent to the category $free(\enom_{A}(M))$ of finitely-generated right free modules over $\enom_{A}(M)$. Moreover,  the full 
additive subcategory closed under retracts generated by $M$ is equivalent to the category $proj(\enom_{A}(M))$ of finitely-generated right projective modules over $\enom_{A}(M)$. 
This certainly implies that the additive category $A$ is equivalent to the filtered colimit of categories $proj(\enom_{A}(M))$. 
By Corollary 6.4 of \cite{schlichting} $\kthe_i(A) \cong \operatorname{colim}_{M \in \obj A} \kthe_i(\enom_A(M))$ for any idempotent complete additive category $A$, where the colimit is taken with respect to the maps induced by the embeddings of minimal Karoubi-closed additive subcategories of $A$ containing $M$.

Now let $\dmges(k; R)$ denote the category of compact objects in the category of effective Voevodsky motives over a field $k$ 
with coefficients in a ring $R$. 
Assume also that $\operatorname{char}(k)$ is invertible in $R$. 
By the results of section 6.5 of \cite{bws} there exists a bounded weight structure on this category whose heart is the 
category of Chow motives. 

From Voevodsky's construction it is clear that $\dmges(k;R)$ admits an $\infty$-enhancement 
(see also the paper \cite{beilvol} for the construction of a thorough of a dg-enhancement). 
We denote the corresponding 
stable $\infty$-category 
by $\underline{\dmges}(k;R)$. 
Now Theorem \ref{mainwih} yields the following generalization of Theorem 6.4.2 of \cite{wcom}.

\begin{pr}\label{kchwo}
The weight complex functor for  $\underline{\dmges}(k;R)$ 
induces isomorphisms 
$\kthe_n(\underline{\dmges}(k;R)) \to \kthe_n(\chow^{eff}(k;R))$ for any $n\le 0$.
\end{pr}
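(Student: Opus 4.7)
The plan is to apply Theorem \ref{mainwih} directly to $\cu = \underline{\dmges}(k;R)$ equipped with the weight structure on its homotopy category $\dmges(k;R)$ coming from Bondarko's Chow weight structure. All of the required hypotheses are in place by the preceding discussion: the stable $\infty$-enhancement $\underline{\dmges}(k;R)$ exists (either from Voevodsky's original construction, or equivalently via the dg-nerve of the dg-enhancement of \cite{beilvol}), and by section 6.5 of \cite{bws}, since $\operatorname{char}(k)$ is invertible in $R$, the Chow weight structure on $h(\underline{\dmges}(k;R)) = \dmges(k;R)$ is bounded with heart $\chow^{eff}(k;R)$.

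Having checked these hypotheses, I would invoke Corollary \ref{wecom} to produce the weight complex functor
\[
t\colon \underline{\dmges}(k;R) \to \underline{Nerve}(Com^b(\chow^{eff}(k;R))),
\]
which is exact, weight-exact, and restricts (up to homotopy) to the identity on hearts, and is characterized uniquely up to homotopy by these properties. This is the weight complex functor referred to in the statement; it induces the canonical map of K-theory spectra $\kthe(\underline{\dmges}(k;R)) \to \kthe(\chow^{eff}(k;R))$ whose effect on negative homotopy groups we wish to analyze.

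Finally, I would apply Theorem \ref{mainwih} verbatim to $(\cu,w) = (\underline{\dmges}(k;R), w_{Chow})$. Its conclusion says precisely that the induced maps $\kthe_n(\underline{\dmges}(k;R)) \to \kthe_n(\chow^{eff}(k;R))$ are isomorphisms for $n \le 0$, which is the desired statement.

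There is no substantive obstacle here: the proposition is a clean specialization of Theorem \ref{mainwih} to the motivic setting, once one knows that the Chow weight structure is bounded and that $\underline{\dmges}(k;R)$ provides an $\infty$-enhancement of $\dmges(k;R)$. The only conceptual point worth highlighting is that the identification of our weight complex functor (from Corollary \ref{wecom}) with the one of \cite{wcom} in the case $n=0$ of Theorem 6.4.2 of \cite{wcom} is automatic from the uniqueness statement in Corollary \ref{wecom}, since both functors are weight-exact and induce the identity on hearts.
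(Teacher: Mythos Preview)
Your proposal is correct and matches the paper's approach exactly: the paper treats this proposition as an immediate consequence of Theorem \ref{mainwih}, with the hypotheses (existence of the $\infty$-enhancement and boundedness of the Chow weight structure with heart $\chow^{eff}(k;R)$) having been recorded in the paragraph preceding the statement. Your write-up simply spells out these verifications and the invocation of Corollary \ref{wecom} and Theorem \ref{mainwih} more explicitly than the paper does.
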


A hard conjecture predicts that for $R = \q$ there exists a bounded t-structure on this category with certain properties called
 the motivic t-structure.  For general rings of coefficients the conjecture doesn't hold (see Proposition 4.3.8 of \cite{voevtrimot}). 

\begin{coro}\label{mottstr}
If the motivic t-structure exists on $\dmges(k;R)$ then $\kthe_{-1}(\chow^{eff}(k;R)) = 0$. 
If, moreover, the generalized Schlichting's conjecture is true (see \cite{thofneghrt}), then $\kthe_{n}(\chow^{eff}(k;R)) = 0$ for all $n<0$.
\end{coro}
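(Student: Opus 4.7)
The plan is to transport Schlichting's vanishing of negative K-theory of abelian categories through two theorems of the heart: the one for t-structures due to Antieau--Gepner--Heller, and our Proposition \ref{kchwo} for weight structures. The key point is that the hypothetical motivic t-structure and the Chow weight structure live on the same stable $\infty$-category $\underline{\dmges}(k;R)$, so negative $K$-groups of the heart of one can be related to negative $K$-groups of the heart of the other by passing through $\kthe_n(\underline{\dmges}(k;R))$.

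First I would assume the motivic t-structure exists on $\dmges(k;R)$ and denote its abelian heart by $\mathcal{A}$. Since a t-structure on a triangulated category is literally the datum of a t-structure on any of its $\infty$-enhancements, this is a bounded t-structure on $\underline{\dmges}(k;R)$ in the sense of \cite{thofneghrt}. Their result gives a canonical isomorphism $\kthe_{-1}(\mathcal{A}) \to \kthe_{-1}(\underline{\dmges}(k;R))$ with no hypothesis on $\mathcal{A}$. By Schlichting's theorem on the vanishing of negative K-theory of abelian categories, $\kthe_{-1}(\mathcal{A}) = 0$, hence $\kthe_{-1}(\underline{\dmges}(k;R)) = 0$. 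Proposition \ref{kchwo} then identifies this group with $\kthe_{-1}(\chow^{eff}(k;R))$, giving the first assertion.

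For the second assertion, I would repeat the argument but use that under the generalized Schlichting conjecture the map $\kthe(\mathcal{A}) \to \kthe(\underline{\dmges}(k;R))$ is an equivalence of nonconnective K-theory spectra. Combined with Schlichting's theorem, which gives $\kthe_n(\mathcal{A}) = 0$ for all $n < 0$, and then with Proposition \ref{kchwo}, this yields $\kthe_n(\chow^{eff}(k;R)) = 0$ for all $n < 0$.

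There is no genuine obstacle here; the corollary is a formal consequence of three inputs (Proposition \ref{kchwo}, the Antieau--Gepner--Heller theorem of the heart for $\kthe_{-1}$ together with its conjectural nonconnective extension, and the vanishing of negative K-theory of abelian categories). The only point requiring a line of justification is that the motivic t-structure, being defined on the triangulated category $\dmges(k;R) = h(\underline{\dmges}(k;R))$, automatically makes $\underline{\dmges}(k;R)$ into a stable $\infty$-category with a bounded t-structure, so that the results of \cite{thofneghrt} apply verbatim.
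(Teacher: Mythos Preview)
Your proof is correct and follows the same route as the paper's. The only cosmetic difference is that the paper cites the main result of \cite{thofneghrt} directly in the form ``$\kthe_{-1}(\cu)=0$ (and $\kthe_n(\cu)=0$ for all $n<0$ under the generalized Schlichting conjecture) whenever $\cu$ carries a bounded t-structure'', whereas you unpack this into the isomorphism $\kthe_{-1}(\mathcal{A})\cong\kthe_{-1}(\cu)$ together with Schlichting's vanishing for abelian categories; both then finish by invoking Proposition~\ref{kchwo}.
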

\begin{proof}
By the main result of \cite{thofneghrt} the groups $\kthe_n(\cu)$ are zero for $n=-1$ and for $n<-1$ if the generalized Schlichting's conjecture is true. 
Hence by Proposition \ref{kchwo} $\kthe_n(\chow^{eff}(S;R))$ are zero.
\end{proof}

\begin{rema}
Note that the homotopy t-structure on $DM^{eff}(k)$ does not restrict to the subcategory $DM^{eff}_{gm}(k)$, so our theorem 
cannot be applied. 
\end{rema}

\begin{rema}
For $R =\q$ the vanishing of the negative K-groups of  $\chow(k;\q)$ also follows from the smash-nilpotence conjecture of Voevodsky. 
Indeed, assume the smash-nilpotence conjecture holds. Then by Corollary 3.3 of \cite{voevod} the ring of endomorphisms of any motive $M \in \obj \chow(k;\q)$ is a nil-extension of the ring of endomorphisms of the image of $M$ in the category of Grothendieck motives $GM(k;\q)$. 
Since $GM(k;\q)$ is semisimple abelian, $\enom_{GM(k;\q)}(M)$ and $\enom_{\chow(k;\q)}(M)$ are artinian rings. 
Thus, their negative K-theory vanishes (see Proposition 10.1 of \cite{bass}.XII). 
Now using the representation of $\kthe_i(\chow(k;\q))$ as the colimit of $\kthe_i(\enom_{\chow(k;\q)}(M))$ 
we obtain that $\kthe_i(\chow(k;\q)) = 0$.
\end{rema}

\begin{question}
Voevodsky has shown that the motivic t-structure does not exist on $\dmges(k;\z)$ if there is a conic without rational points over $k$ (see Proposition 4.3.8 of \cite{voevtrimot}). Is it also possible to see this finding an obstruction to having one in $\kthe_{-1}(\chow(k;\z))$?
\end{question}

\end{document}